\newtheorem{assm}{{Assumption}}
\newtheorem{clam}{{Claim}}
\newtheorem{lema}{{Lemma}}
\newtheorem{thme}{{Theorem}}
\newtheorem{rem}{{Remark}}
\newenvironment*{proof}{{\it Proof.}}{\hfill $\square$\par}
\newtheorem{prope}{\textbf{Proposition}}
\newtheorem{coro}{{Corollary}}
\begin{document}

\title{Event-Triggered Adaptive Control
 of a Parabolic PDE-ODE Cascade with Piecewise-Constant Inputs and Identification}

\author{Ji~Wang,~\IEEEmembership{Member,~IEEE},
       and Miroslav~Krstic,~\IEEEmembership{Fellow,~IEEE}
\thanks{
%This work was supported  by
%the National Science Foundation under Grants 1935329 and 1823983.

J. Wang is with Department of Automation, Xiamen University, Xiamen, Fujian 361005, China (e-mail:jiwang9024@gmail.com).

M. Krstic is with Department of Mechanical and Aerospace Engineering, University of California, San Diego, La Jolla, CA 92093-0411, USA (e-mail: krstic@ucsd.edu).
}}

\maketitle

\begin{abstract}
We present an adaptive event-triggered boundary
control scheme
for a parabolic PDE-ODE system, where the reaction coefficient
of the parabolic PDE, and the system parameter of a scalar ODE, are unknown. In the proposed controller,
the parameter estimates, which are built by batch least-square identification,  are recomputed and the plant states  are resampled simultaneously. As a result, both
the parameter estimates and the control input employ
piecewise-constant values. In the closed-loop system, the following
results are proved: 1) the absence of a Zeno phenomenon; 2) finite-time exact
identification of the unknown parameters under most initial conditions of the plant (all initial conditions except a set of measure zero); 3) exponential regulation of the plant
states to zero.  A
simulation example is presented to validate the theoretical result.
\end{abstract}

\begin{IEEEkeywords}
Parabolic PDEs, adaptive control, backstepping, event-triggered control, least-squares identifier.
\end{IEEEkeywords}

%===============================================================================

\section{Introduction}
\subsection{Boundary control of parabolic PDEs}
Parabolic partial differential equations (PDEs) are predominately used in describing fluid, thermal, and chemical dynamics, including many applications of sea ice melting and freezing \cite{koga2016Backstepping,Wettlaufer1991Heat}, continuous
casting of steel \cite{Petrus2012Enthalpy} and lithium-ion batteries \cite{koga2017State,Tang2017State}.
These therefore give rise to related important control and estimation problems of parabolic PDEs.

The backstepping approach has been verified
as a very powerful tool for boundary stabilization of PDEs. The first backstepping boundary control design for parabolic
PDEs was introduced in \cite{Liu2003Boundary,Liu2000Backstepping}. Subsequently, more results about boundary control of parabolic PDEs have emerged in the past decade, such as
\cite{2015Boundary,Deutscher2015backstepping,Deutscher2016backstepping,Meurer2009Tracking,Orlov2017Output,Pisano2012Boundary}.

In addition to the aforementioned works on parabolic PDEs, topics concerning parabolic PDE-ODE coupled systems are
also popular, which have rich physical background such as coupled
electromagnetic, coupled mechanical, and coupled chemical
reactions \cite{tang2011state}. Backstepping stabilization of a parabolic PDE in cascade with a linear
ODE has been primarily presented in \cite{krstic2009Compensating} with Dirichlet
type boundary interconnection and, the results on Neuman
boundary interconnection were presented in \cite{Susto2000,Tang2011}. Besides, backstepping boundary control designs of a parabolic PDE sandwiched by two ODEs were presented in \cite{Deutscher2020Output,J2019Output}.
\subsection{Event-triggered control of PDEs}
When implementing the continuous-in-time controllers
into digital platforms, sampling needs to be addressed properly to ensure the stability of the closed-loop system. Compared with periodical sampling, i.e., sampled-data scheme \cite{Karafyllis2017}, the event-triggered
strategy is more efficient in the aspect of
using communication and computational resources, because the sampling happens only when needed.

Most current event-triggered control are designed for ODE systems, such as in \cite{Girard2015,Heemels2012,Seuret2014,Tabuada2007}. For event-triggered control in PDE systems, some pioneering attempts  were proposed relaying on distributed (in-domain) control inputs \cite{Selivanov2015,Yao2013} or modal decomposition \cite{Katz2019,Katz2020}. Recently, using the infinite dimensional
control approach, the state-feedback event-triggered boundary control for a class of $2\times 2$ hyperbolic PDEs was first presented in \cite{Espitia2018}, and then was extended to the output-feedback case in \cite{Espitia2020}. Output-feedback event-trigged boundary control of a sandwich hyperbolic PDE system was also proposed in \cite{JiEvent2021}. For parabolic PDEs, the first result on event-triggered boundary control was proposed in \cite{EspitiaArc}, and subsequently, observer-based output-feedback event-triggered boundary control design was presented in  \cite{RathnayakeObserver}. The above-mentioned event-triggered control design only focused on a PDE plant with completely known parameters while there always exist some plant parameters not known exactly in practice, which creates a need for incorporating adaptive technology.
\subsection{Triggered adaptive control of PDEs}
Three traditional adaptive schemes are the Lyapunov design, the passivity-based design, and the swapping design \cite{Krstic2006}, which were initially developed for ODEs in \cite{krstic1995}, and extended to parabolic PDEs  in \cite{Ahmed2016Adaptive,Ahmed2017Adaptive,KrsticSmyshlyaev2008Adaptive,Li2020,Smyshlyaev2007Adaptivea,Smyshlyaev2007Adaptiveb}, and hyperbolic PDEs in \cite{Anfinsen2019Adaptive,Bernard2014}.

Recently, a new adaptive scheme, using a regulation-triggered batch least-square identifier (BaLSI), was introduced in \cite{Karafyllis2019Adaptive,Karafyllis2018Adaptive}, which has at least two significant advantages over the previous traditional adaptive approaches: guaranteeing exponential regulation of the states to zero, as well as finite-time convergence of the estimates to the true values.  {An application of this new adaptive scheme to a two-link manipulator, which is modeled by a highly nonlinear ODE system and subject to four parametric uncertainties, was shown in \cite{Bagheri2021}.} Regarding PDEs, this method has been applied in adaptive control of a parabolic PDE in \cite{Karafyllis2019Adaptive1}, and of first-order hyperbolic PDEs in \cite{JiAdaptive2021,JiACC2021}. In the
above designs, the triggering is employed for the parameter estimator (update law), rather than the control law, where the plant states are not
sampled. Conversely, in \cite{JiAdaptive2020}, an event-triggered adaptive control design
was proposed by employing triggering for the control law
instead of the parameter estimator, where only asymptotic
convergence is achieved.  In this paper, the triggering is
employed for updating both the parameter estimator and the plant states
in the control law. Both the parameter estimates and
the control input thus employ piecewise-constant values, and
exponential regulation of the plant states is achieved.
\subsection{Contributions}
\begin{itemize}
\item As compared to the previous results in \cite{EspitiaArc,RathnayakeObserver} on event-triggered
backstepping control for parabolic PDEs with completely known plant parameters, uncertain plant parameters and additional ODE dynamics are considered in this paper.

\item Different from adaptive control designs of parabolic PDEs in \cite{KrsticSmyshlyaev2008Adaptive,Smyshlyaev2007Adaptivea,Smyshlyaev2007Adaptiveb} where the control input is continuous-in-time, the control input is piecewise-constant in this paper. Moreover, the exponential regulation to zero of the
plant states and exact parameter identification (for all initial conditions of the plant except a set of measure zero) are achieved in this paper.

\item
Compared with the triggered-type adaptive control designs for parabolic PDE in \cite{Karafyllis2019Adaptive1}, where  triggering is only employed for the parameter update
law rather than the plant states in the  controller, in this paper,  triggering is employed for both
updating the parameter estimator and resampling the plant states in the controller and, as a result, both
the parameter estimates and the control input employ
piecewise-constant values. Moreover, an additional uncertain ODE at the uncontrolled boundary of the parabolic PDE is considered in this paper.

\item To the best of our knowledge, this is the first adaptive event-triggered
boundary control result of parabolic PDEs. Moreover, both the control input and the
parameter estimates employ piecewise-constant values. The result is new even if remove the ODE dynamics.
\end{itemize}
\subsection{Organization}
The problem formulation is shown in Section \ref{13sec:problem}. The nominal continuous-in-time control design is presented in Section \ref{13sec:nominalcontrol}. The design of event-triggered control with piecewise-constant parameter identification is proposed in Section \ref{13sec:adaptive}. The absence of a Zeno phenomenon, parameter convergence,  and exponential regulation are proved in
 Section \ref{13sec:sta}.  The effectiveness of the proposed design
is illustrated by an numerical example in Section \ref{13sec:sim}. The conclusion and
future work are presented in Section \ref{sec:conclusion}.
\subsection{Notation} We adopt the following notation.
\begin{itemize}
\item The symbol $\mathbb Z^+$ denotes the set of all non-negative integers, and $\mathbb R_+:=[0,+\infty)$, and $\mathbb R_-:=(-\infty,0)$.

\item Let $U\subseteq \mathbb R^n$ be a set with non-empty interior and let $\Omega\subseteq\mathbb R$
be a set. By $C^0 (U;\Omega)$, we denote the class of continuous
mappings on $U$, which take values in $\Omega$. By $C^k (U;\Omega)$, where
$k \ge 1$, we denote the class of continuous functions on $U$,
which have continuous derivatives of order $k$ on $U$ and take
values in $\Omega$.

\item We use the notation $\mathbb N$ for the set $\{1,2,\cdots\}$, i.e., the natural numbers without 0.

\item We use the notation $L^2(0, 1)$ for the standard space of the equivalence
class of square-integrable, measurable functions defined
on $(0, 1)$ and $\|f\|=\left(\int_0^1 f(x)^2 dx\right)^{\frac{1}{2}}<+\infty$ for $f \in L^2(0, 1)$.

\item For an $I\subseteq \mathbb R_+$, the space $C^0(I;L^2(0,1))$ is the space of continuous mappings $I\ni t\mapsto u[t]\in L^2(0,1)$.

\item Let $u: \mathbb R_+\times [0,1]\rightarrow \mathbb R$  be given. We use the notation $u[t]$ to denote the profile of $u$ at certain $t\ge 0$, i.e., $(u[t])(x)=u(x,t)$, for all $x\in[0,1]$.
\end{itemize}
\section{Problem Formulation}\label{13sec:problem}
We conduct the control design based on the following parabolic PDE-ODE system,
\begin{align}
\dot \zeta (t)& = a\zeta (t) + {b}u(0,t) ,\label{eq:ode1}\\
{u_t}(x,t) &= \varepsilon{u_{xx}}(x,t)+{\lambda}u(x,t),\label{eq:ode2}\\
u_x(0,t)  &= 0,\label{eq:ode4}\\
u_x(1,t)+q u(1,t) &=  U(t),\label{eq:ode5}
\end{align}
for $x\in[0,1],t\in[0,\infty)$, where $\zeta(t)$ is a scalar  ODE state and ${u}(x,t)$ is a PDE state. The function $U(t)$ is the control input to be designed. The parameters $\varepsilon,\lambda, a$ are arbitrary, and $b\neq 0$. The ODE system parameter $a$ and the coefficient $\lambda$ of the PDE in-domain couplings are unknown.

The motivation of the considered PDE-ODE cascade is from control of uncertain thermal-fluid systems with uncertain finite-dimensional sensor dynamics. We only consider a scalar ODE in this paper with the purpose of presenting the proposed control
design  more clearly. With a modest modification, the result in this paper is possible to extend to the case that the ODE state is a vector and the system matrix and input matrix  in the ODE are linear functions of unknown parameters. We reiterate here that the result in this paper is new even if the ODE \eqref{eq:ode1} is absent.
\begin{assm}\label{as:coe1}
The bounds of the unknown parameters $\lambda$, $a$ are known and arbitrary, i.e.,
\begin{align}
&\underline \lambda\le \lambda\le \overline \lambda,\\
&\underline a\le a\le \overline a,
\end{align}
where $\underline \lambda,\overline \lambda,\overline a,\underline a$ are arbitrary positive constants whose values are known.
\end{assm}
{We define a vector $\theta$ which incudes the two unknown parameters $\lambda$, $a$, i.e.,
\begin{align*}
\theta=[\lambda,a]^T.\label{eq:theta}
\end{align*}}\begin{assm}\label{eq:As1}
The parameter $q$ satisfies
\begin{align}
q>\frac{1}{4}+\frac{\overline\lambda}{2\varepsilon}.
\end{align}
\end{assm}
Assumption \ref{eq:As1} avoids the use of the signal $u(1,t)$ in the nominal control law, whose purpose is to ensure no Zeno behavior in the event-based system. It should be mentioned
that an eigenfunction expansion of the solution of \eqref{eq:ode2}--\eqref{eq:ode5} with $U(t)=0$ shows that the parabolic PDE system is unstable when
$\lambda>\varepsilon\pi^2/4$, no matter what $q>0$ is \cite{RathnayakeObserver}.
\section{Nominal control design}\label{13sec:nominalcontrol}
In the nominal control design, similar to \cite{Deutscher2020Output}, we apply three transformations to convert the original plant \eqref{eq:ode1}--\eqref{eq:ode5} to an exponentially stable target system.

We introduce the following backstepping transformation ((4.55) in \cite{krstic2008}),
\begin{align}
w (x,t) = u(x,t) - \int_0^x {\Psi}(x,y)u(y,t)dy,\label{eq:contran1b}
\end{align}
where
\begin{align}
{\Psi}(x,y)=-\frac{\lambda}{\varepsilon}x\frac{I_1(\sqrt{\lambda(x^2-y^2)/\varepsilon})}{\sqrt{\lambda(x^2-y^2)/\varepsilon}},\label{eq:Psi}
\end{align}
and $I_1$ denotes the modified Bessel functions of the
first kind,
to convert the plant \eqref{eq:ode1}--\eqref{eq:ode5} to the  system
\begin{align}
\dot \zeta (t)& = a\zeta (t) + {b}w(0,t) ,\label{eq:I1}\\
{w_t}(x,t) &= \varepsilon{w_{xx}}(x,t),\label{eq:I2}\\
w_x(0,t)  &= 0,\label{eq:I3}\\
w_x(1,t)+r w(1,t) &=U(t)+(r-q-{\Psi}(1,1)) u(1,t)\notag\\&\quad - \int_0^1 \bigg({\Psi}_x(1,y)+q{\Psi}(1,y)\notag\\&\quad+{\Psi}(1,y)(r-q)\bigg)u(y,t)dy,\label{eq:I4}
\end{align}
where $r$ is a positive constant which will be determined later.

Following Section 4.5 in \cite{krstic2008}, the inverse transformation of \eqref{eq:contran1b} is shown as
\begin{align}
u (x,t) = w(x,t) - \int_0^x {\Phi}(x,y)w(y,t)dy,\label{eq:contran1bI}
\end{align}
where
\begin{align}
{\Phi}(x,y)=-\frac{\lambda}{\varepsilon}x\frac{J_1(\sqrt{\lambda(x^2-y^2)/\varepsilon})}{\sqrt{\lambda(x^2-y^2)/\varepsilon}},\label{eq:Phi}
\end{align}
and $J_1$ denotes the (nonmodified) Bessel functions of the
first kind.

Applying the second transformation,
\begin{align}
v(x,t) = w(x,t) - \gamma(x)\zeta(t),\label{eq:contran2b}
\end{align}
where
\begin{align}
\gamma(x)=-\kappa\cos\left(\sqrt{\frac{(b\kappa-a)}{\varepsilon}}x\right),\label{eq:ga}
\end{align}
we convert the system \eqref{eq:I1}--\eqref{eq:I4} into the following intermediate system,
\begin{align}
\dot \zeta (t) =& -a_{\rm m}\zeta (t) + {b}v (0,t) ,\label{eq:Ib1}\\
{v _t}(x,t)=& {\varepsilon}{v _{xx}}(x,t)-\gamma(x){b}v (0,t),\label{eq:Ib2}\\
v_x(0,t)=&0\label{eq:Ib3}\\
v_x(1,t)+rv(1,t)=&U(t)- \gamma'(1)\zeta(t)- r\gamma(1)\zeta(t)\notag\\& +(r-q-{\Psi}(1,1)) u(1,t)\notag\\& - \int_0^1 \bigg({\Psi}_x(1,y)+q{\Psi}(1,y)\notag\\&+{\Psi}(1,y)(r-q)\bigg)u(y,t)dy, \label{eq:Ib4}
\end{align}
where
\begin{align}
a_{\rm m}=b\kappa-a>0\label{eq:am}
\end{align}
is ensured by a design parameter $\kappa$ satisfying
\begin{align}
\kappa>\frac{\overline a}{b}.\label{eq:kapa}
\end{align}
Please see Appendix-A for the calculation of the conditions of $\gamma(x)$ by matching \eqref{eq:I1}--\eqref{eq:I4} and \eqref{eq:Ib1}--\eqref{eq:Ib4} via \eqref{eq:contran2b}.

We introduce the third transformation
\begin{align}
\beta(x,t)=v(x,t)-\int_0^x h(x,y)v(y,t)dy\label{eq:thirdtran}
\end{align}
where the kernel $h(x,y)$ is to be determined,
to convert \eqref{eq:Ib1}--\eqref{eq:Ib4} into the target system
\begin{align}
\dot \zeta (t) =& -a_{\rm m}\zeta (t) + {b}\beta (0,t) ,\label{eq:targ1}\\
{\beta _t}(x,t)=& {\varepsilon}{\beta _{xx}}(x,t),\label{eq:targ2}\\
\beta_x(0,t)=&0,\label{eq:targ3}\\
\beta_x(1,t)=&-r\beta(1,t), \label{eq:targ4}
\end{align}
where
\begin{align}
r=q+{\Psi}(1,1)+h(1,1)=q-\frac{\lambda}{2\varepsilon}>\frac{1}{4}\label{eq:r}
\end{align}
with recalling ${\Psi}(1,1)=-\frac{\lambda}{2\varepsilon}$ considering \eqref{eq:Psi}, and $h(1,1)=0$ derived from the last two conditions of $h(x,y)$ which are shown next, and Assumption \ref{eq:As1}.

By matching \eqref{eq:targ1}--\eqref{eq:targ3} and \eqref{eq:Ib1}--\eqref{eq:Ib3} via \eqref{eq:thirdtran} (see Appendix-B for details), the conditions of $h(x,y)$ are obtained as
\begin{align}
&h_y(x,0){\varepsilon}+\gamma(x){b}-{b}\int_0^x h(x,y)\gamma(y)dy=0,\label{eq:h1}\\
&h_{yy}(x,y)-h_{xx}(x,y)=0,\label{eq:h2}\\
&h_y(x,x)+h_x(x,x)=0,\label{eq:h3}\\
&h(0,0)=0,\label{eq:h4}
\end{align}
which is covered by the ones found in \cite{Deutscher2018backstepping} which ensures that \eqref{eq:h1}--\eqref{eq:h4} have a piecewise $C_2$-solution.

For \eqref{eq:targ4} to hold, the control input is chosen as
\begin{align}
U(t)=&\int_0^1 K_1(1,y;\theta)u(y,t)dy+K_2(1;\theta)\zeta(t),\label{eq:U}
\end{align}
where
\begin{align}
K_1(1,y;\theta)=&{\Psi}_x(1,y)+h_x(1,y)+rh(1,y)\notag\\&+\left(q-\frac{\lambda}{2\varepsilon}\right){\Psi}(1,y)\notag\\&-\int_y^1 (h_x(1,z)+rh(1,z)) {\Psi}(z,y)dz,\\
K_2(1;\theta)=&\gamma'(1)+ \left(q-\frac{\lambda}{2\varepsilon}\right)\gamma(1)\notag\\&-\int_0^1 (h_x(1,y)+rh(1,y))\gamma(y)dy.\label{eq:K2}
\end{align}
Writing $\theta=[\lambda,a]^T$ in $K_1$, $K_2$ emphasizes the fact that  $K_1$, $K_2$ depend on the unknown parameters $\lambda$, $a$ (the kernels $\Psi,\gamma, h$ defined in \eqref{eq:Psi}, \eqref{eq:ga}, \eqref{eq:h1}--\eqref{eq:h4} include these unknown parameters).

According to \cite{Deutscher2020Output}, there exists kernel $h^I(x,y)\in\mathbb R$ for the inverse transformation of \eqref{eq:thirdtran}, which is shown as
\begin{align}
v(x,t)=\beta(x,t)-\int_0^x h^I(x,y)\beta(y,t)dy.\label{eq:thirdtranI}
\end{align}
\section{Event-triggered control design with piecewise-constant parameter identification}\label{13sec:adaptive}
Based on the  nominal continuous-in-time feedback \eqref{eq:U}, we give the form of
an adaptive event-triggered control law $U_d$, as follows:
\begin{align}
U_{di}:= U_d(t_i)=&  \int_0^1 K_1(1,y;\hat\theta(t_i))u(y,t_i)dy\notag\\&+K_2 (1;\hat\theta(t_i))\zeta(t_i)\label{eq:dU}
\end{align}
for $t\in[t_i,t_{i+1})$, where
\begin{align}
\hat\theta=[\hat \lambda, \hat a]^T\label{eq:hattheta}
\end{align}
is an estimate, which is generated with a triggered batch least-squares identifier (BaLSI), of the two unknown parameters $\lambda,a$. The identifier, and the sequence of time instants $\{t_i\ge 0\}_{i=0}^{\infty}$, $i\in \mathbb Z^+$ are defined in the next subsection.

Inserting the piecewise-constant control input $U_{di}$ into \eqref{eq:ode5}, the boundary condition becomes
\begin{align}
u_x(1,t)+qu(1,t) =U_{di}.\label{eq:(1)}
\end{align}

When we mention the continuous-in-state control signal $U_c$, we refer to the control input consisting of triggered parameter estimates and continuous states, i.e.,
\begin{align}
& U_c(t) = \int_0^1 K_1(1,y;\hat\theta(t_i))u(y,t)dy+K_2 (1;\hat\theta(t_i))\zeta(t)\label{eq:cU}
\end{align}
for $t\in[t_i,t_{i+1})$.
Define the difference between the continuous-in-state control signal $U_c$ in \eqref{eq:cU} and the event-triggered control input $U_d$ in \eqref{eq:dU} as $d(t)$, given by
\begin{align}
d(t)&=U_c(t)-U_d(t)\notag\\
&=\int_0^1 K_1(1,y;\hat\theta(t_i))(u(y,t)-u(y,t_i))dy\notag\\
&\quad+K_2 (1;\hat\theta(t_i))(\zeta(t)-\zeta(t_i))\label{eq:d}
\end{align}
for $t\in[t_i,t_{i+1})$, which reflects the deviation of the plant states from their sampled values. The signal $U_c$ dose not act as the control input of the plant but used in the ETM (i.e., $d(t)$) which will be shown latter.

Define the difference between the continuous-in-state control signal $U_c(t)$ in \eqref{eq:cU} and the nominal continuous-in-time control input $U(t)$ in \eqref{eq:U} as $p(t)$, given by
\begin{align}
p(t)=&U(t)-U_c(t)\notag\notag\\
=&\int_0^1 (K_1(1,y;\theta)-K_1(1,y;\hat\theta(t_i)))u(y,t)dy\notag\\
\quad&+(K_2 (1;\theta)-K_2 (1;\hat \theta(t_i)))\zeta(t),~~t\in[t_i,t_{i+1})\label{eq:p}
\end{align}
which reflects the deviation of the estimates from the actual unknown parameters.

The deviations $d(t)$ and $p(t)$ will be used in the following design and analysis.
\subsection{Event-Triggering Mechanism}
The sequence of time instants $\{t_i\ge 0\}_{i=0}^{\infty}$ ($t_0=0$) is defined as
\begin{align}
{t_{i+1}} = \min\{\inf \{ t > {t_{i}}:d{(t)^2} \ge  -\xi m(t)\}, t_i+T\},\label{eq:tk1}
\end{align}
where the positive constant $\xi$ is a design parameter, and another design parameter $T>0$ sets the maximum dwell-time with the purpose of avoiding the less
frequent updates of the parameter estimates which may lead to a large overshoot in the
response of the closed-loop system.

The dynamic variable $m(t)$ in \eqref{eq:tk1} satisfies the ordinary differential equation,
\begin{align}
\dot m(t) = & - \eta m(t)+\lambda_d d(t)^2- {\kappa _1}{u}{(1,t)^2} - {\kappa _2}u{(0,t)^2}\notag\\
&-{\kappa _3}\|u(\cdot,t)\|^2-{\kappa _4}\zeta(t)^2\label{eq:dm}
\end{align}
for $t\in(t_i,t_{i+1})$ with  $m(t_0)=m(0)<0$, and $m(t_i^-)=m(t_i)=m(t_i^+)$. Here, $t_i^+$ and $t_i^-$
 are the right
and left limits of $t=t_i$. It is worth noting that the initial condition
for $m(t)$ in each time interval has been chosen such that $m(t)$
is time-continuous. The positive design parameters $\kappa_1,\kappa_2,\kappa_3,\kappa_4$ are determined later. Inserting $d(t)^2\le -\xi m(t)$ guaranteed by \eqref{eq:tk1} into \eqref{eq:dm}, we have
\begin{align}
\dot m(t) \le & - (\eta+\lambda_d\xi) m(t)- {\kappa _1}{u}{(1,t)^2} - {\kappa _2}u{(0,t)^2}\notag\\
&-{\kappa _3}\|u(\cdot,t)\|^2-{\kappa _4}\zeta(t)^2.
\end{align}
Recalling $m(0)<0$ and applying the comparison principle, we have that $m(t)<0$ all the time.
\subsection{Batch Least-Squares Identifier}\label{sec:ls}
According to  \eqref{eq:ode1},   \eqref{eq:ode2}, we get for $\tau>0$ and $ n=1,2,\cdots$ that
\begin{align}
&\frac{d}{d\tau}\left(\int_0^{1}\sin({x\pi n})u(x,\tau)dx-\frac{1}{b}\varepsilon \pi n\zeta(\tau)\right)\notag\\
=&{\varepsilon}\int_0^{1}\sin({x\pi n})u_{xx}(x,\tau)dx\notag\\
&+\lambda\int_0^{1}\sin({x\pi n})u(x,\tau)dx-\frac{1}{b}\varepsilon \pi n\dot\zeta(t)\notag\\
=&-\varepsilon \pi n\cos({\pi n})u(1,\tau)+\varepsilon \pi nu(0,\tau)\notag\\&-\frac{a}{b}\varepsilon \pi n\zeta(t)-\varepsilon \pi nu(0,\tau)\notag\\
&-\varepsilon \pi^2 n^2\int_0^{1}\sin({x\pi n})u(x,\tau)dx+\lambda\int_0^{1}\sin({x\pi n})u(x,\tau)dx\notag\\
=&-\varepsilon \pi n\cos({\pi n})u(1,\tau)-\varepsilon \pi^2 n^2\int_0^{1}\sin({x\pi n})u(x,\tau)dx\notag\\&+\lambda\int_0^{1}\sin({x\pi n})u(x,\tau)dx-\frac{1}{b}a\varepsilon \pi n\zeta(t).\label{eq:Ls1}
\end{align}
Define
\begin{align}
\mu_{i+1}:=\min\{t_d:d\in\{0,\ldots,i\},t_d\ge t_{i+1}-\tilde N T\},\label{13eq:mui}
\end{align}
according to \cite{Karafyllis2018Adaptive}, where the positive integer $\tilde N\ge 1$ is a free design parameter (a lager $\tilde N$ will
make the least-squares identifier run based on a bigger set of
data, which makes the identifier more robust with respect to
measurement errors), and the positive constant $T$ is the maximum dwell-time in \eqref{eq:tk1}.
Integrating \eqref{eq:Ls1} from $\mu_{i+1}$ to $t$, yields
\begin{align}
&f_n(t,\mu_{i+1})=\lambda g_{n,1}(t,\mu_{i+1})+ag_{n,2}(t,\mu_{i+1}),\label{eq:f}
\end{align}
where
\begin{align}
f_n(t,\mu_{i+1})=&\int_0^{1}\sin({x\pi n})u(x,t)dx-\frac{1}{b}\varepsilon \pi n\zeta(t)\notag\\&-\int_0^{1}\sin({x\pi n})u(x,\mu_{i+1})dx+\frac{1}{b}\varepsilon \pi n\zeta(\mu_{i+1})\notag\\
&+\int_{\mu_{i+1}}^t\bigg[\varepsilon \pi n(-1)^nu(1,\tau)\notag\\
&+\varepsilon \pi^2 n^2\int_0^{1}\sin({x\pi n})u(x,\tau)dx\bigg]d\tau,\label{eq:fn}\\
g_ {n,1}(t,\mu_{i+1})=&\int_{\mu_{i+1}}^t\int_0^{1}\sin({x\pi n})u(x,\tau)dxd\tau,\label{eq:gq1}\\
g_ {n,2}(t,\mu_{i+1})=&-\frac{1}{b}\varepsilon \pi n\int_{\mu_{i+1}}^t\zeta(\tau)d\tau,\label{eq:gq2}
\end{align}
for $ n=1,2,\cdots$.

Define the function $h_{i,n}:\mathbb R^{2}\to \mathbb R_+$ by the formula:
\begin{align}
h_{i,n}(\ell)=&\int_{\mu_{i+1}}^{t_{i+1}}\big(f_n(t,\mu_{i+1})-\ell_1 g_{n,1}(t,\mu_{i+1})\notag\\&-\ell_2g_{n,2}(t,\mu_{i+1})\big)^2dt\label{eq:hi}
\end{align}
for $i\in  \mathbb Z^+$, $n\in \mathbb N$,
$\ell=[\ell_1,\ell_2]^T$.

According to \eqref{eq:f}, the function $h_{i,n}(\ell)$ in \eqref{eq:hi} has a global minimum $h_{i,n}(\theta)=0$. We get from Fermat's theorem (vanishing gradient at extrema) that the following matrix equation hold for every $i\in  \mathbb Z^+$ and $n=1,2,\cdots$:
\begin{align}
Z_n(\mu_{i+1},t_{i+1})=G_n(\mu_{i+1},t_{i+1})\theta\label{eq:Fer}
\end{align}
where
\begin{align}
&Z_n(\mu_{i+1},t_{i+1})=[H_{n,1}(\mu_{i+1},t_{i+1}),H_{n,2}(\mu_{i+1},t_{i+1})]^T,\label{eq:bZ}\\
&G_n(\mu_{i+1},t_{i+1})=\left[
    \begin{array}{ccc}
      Q_{n,1}(\mu_{i+1},t_{i+1}) & Q_{n,2}(\mu_{i+1},t_{i+1})\\
     Q_{n,2}(\mu_{i+1},t_{i+1}) & Q_{n,3}(\mu_{i+1},t_{i+1})\\
    \end{array}\label{eq:bG}
  \right]
\end{align}
with
\begin{align}
H_{n,1}(\mu_{i+1},t_{i+1})&=\int_{\mu_{i+1}}^{t_{i+1}}g_{n,1}(t,\mu_{i+1})f_{n}(t,\mu_{i+1}) dt,\label{eq:H1m}\\
H_{n,2}(\mu_{i+1},t_{i+1})&=\int_{\mu_{i+1}}^{t_{i+1}}g_{n,2}(t,\mu_{i+1})f_{n}(t,\mu_{i+1}) dt,\label{eq:H2m}\\
Q_{n,1}(\mu_{i+1},t_{i+1})&=\int_{\mu_{i+1}}^{t_{i+1}}g_{n,1}(t,\mu_{i+1})^2 dt,\label{eq:Q1m}\\
Q_{n,2}(\mu_{i+1},t_{i+1})&=\int_{\mu_{i+1}}^{t_{i+1}}g_{n,1}(t,\mu_{i+1})g_{n,2}(t,\mu_{i+1}) dt,\label{eq:Q2m}\\
Q_{n,3}(\mu_{i+1},t_{i+1})&=\int_{\mu_{i+1}}^{t_{i+1}}g_{n,2}(t,\mu_{i+1})^2 dt.\label{eq:Q3m}
\end{align}

Indeed, \eqref{eq:Fer} is obtained by differentiating the functions $h_{i,n}(\ell)$ defined by \eqref{eq:hi} with respect to $\ell_1$, $\ell_2$, respectively, and evaluating the derivatives at the position of the global minimum $(\ell_1,\ell_2)=(\lambda,a)$.

The parameter estimator (update law) is defined as
\begin{align}
&\hat\theta(t_{i+1})={\rm argmin}\bigg\{|\ell-\hat\theta(t_i)|^2: {\ell\in \Theta}, \notag\\
&Z_n(\mu_{i+1},t_{i+1})=G_n(\mu_{i+1},t_{i+1})\ell,~~ n=1,2,\cdots\bigg\},\label{eq:adaptivelaw}
\end{align}
where
\begin{align}
{\Theta=\{\ell\in \mathbb R^2:\underline \lambda\le \ell_1\le \overline \lambda ,\underline a\le\ell_2\le \overline a\}}.
\end{align}
\subsection{Well-Posedness Issues}
Integrating \eqref{eq:ode1}, we have
\begin{align}
\zeta (t) = a\int_0^t\zeta (\tau)d\tau + {b}\int_0^tu(0,\tau)d \tau+\zeta (0),~~t\ge 0.\label{eq:odef}
\end{align}
The well-posedness property is stated as follows.
{\begin{prope}\label{13Pro:1}
For every $u[t_i]\in L^2(0,1)$, $\zeta(t_i)\in \mathbb R$, $m(t_i)\in \mathbb R_-$, there exist  unique mappings $u\in C^0([t_i,t_{i+1}];L^2(0,1))\cap C^1((t_i,t_{i+1})\times[0,1])$ with $u[t]\in C^2([0,1])$, $\zeta\in C^0([t_i,t_{i+1}];\mathbb R)$, $m\in C^0([t_i,t_{i+1}];\mathbb R_{-})$, which satisfy \eqref{eq:ode2}, \eqref{eq:ode4}, \eqref{eq:dU}, \eqref{eq:(1)}, \eqref{eq:odef}, and \eqref{eq:dm}.
\end{prope}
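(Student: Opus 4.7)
The plan is to exploit the fact that on $[t_i,t_{i+1}]$ the input $U_{di}$ in \eqref{eq:dU} and the estimate $\hat\theta(t_i)$ are constants fixed by the sampled data at $t_i$, so the PDE subsystem \eqref{eq:ode2}, \eqref{eq:ode4}, \eqref{eq:(1)} becomes a linear parabolic equation with time-independent nonhomogeneous Robin boundary data that is \emph{decoupled from both $\zeta$ and $m$}. I would therefore solve the PDE first, cascade into the linear Volterra equation \eqref{eq:odef} for $\zeta$ driven by the trace $u(0,t)$, and finally into the scalar ODE \eqref{eq:dm} for $m$.

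\textbf{Step 1 (PDE).} Lift the boundary data via a smooth function $\bar u(x)$ satisfying $\bar u'(0)=0$ and $\bar u'(1)+q\bar u(1)=U_{di}$, and set $\tilde u(x,t)=u(x,t)-\bar u(x)$. Then $\tilde u$ satisfies a homogeneous-Robin linear parabolic problem with smooth $x$-dependent source $\varepsilon\bar u''(x)+\lambda\bar u(x)$ and initial datum $\tilde u[t_i]=u[t_i]-\bar u\in L^2(0,1)$. The operator $A\varphi=\varepsilon\varphi''+\lambda\varphi$ on the domain $D(A)=\{\varphi\in H^2(0,1):\varphi'(0)=0,\ \varphi'(1)+q\varphi(1)=0\}$ is self-adjoint (standard Sturm--Liouville theory), bounded above, and hence generates an analytic semigroup on $L^2(0,1)$. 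Existence and uniqueness of a mild solution $\tilde u\in C^0([t_i,t_{i+1}];L^2(0,1))$ follow from standard semigroup theory, and the instantaneous smoothing of analytic semigroups together with up-to-the-boundary parabolic Schauder estimates yield $\tilde u\in C^1((t_i,t_{i+1})\times[0,1])$ with $\tilde u[t]\in C^2([0,1])$ for every $t>t_i$. Returning to $u=\tilde u+\bar u$ preserves these regularities, and the original BCs \eqref{eq:ode4}, \eqref{eq:(1)} hold in the classical sense.

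\textbf{Step 2 (ODEs for $\zeta$ and $m$).} By Step 1 the traces $t\mapsto u(0,t)$ and $t\mapsto u(1,t)$ are continuous on $(t_i,t_{i+1}]$, and continuity at $t=t_i$ transfers from the $L^2$-continuity of $u[t]$ through the mild-solution representation. Then \eqref{eq:odef} is equivalent to $\dot\zeta=a\zeta+bu(0,t)$ with continuous forcing, and variation of constants gives a unique $\zeta\in C^0([t_i,t_{i+1}];\mathbb R)$. With $u$ and $\zeta$ now continuous, $d(t)$ from \eqref{eq:d} is continuous, so the forcing $\lambda_d d(t)^2-\kappa_1 u(1,t)^2-\kappa_2 u(0,t)^2-\kappa_3\|u[t]\|^2-\kappa_4\zeta(t)^2$ in \eqref{eq:dm} is continuous and the closed-form expression $m(t)=e^{-\eta(t-t_i)}m(t_i)+\int_{t_i}^t e^{-\eta(t-\tau)}\bigl(\lambda_d d(\tau)^2-\kappa_1 u(1,\tau)^2-\kappa_2 u(0,\tau)^2-\kappa_3\|u[\tau]\|^2-\kappa_4\zeta(\tau)^2\bigr)d\tau$ is the unique continuous solution of \eqref{eq:dm}. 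The triggering rule \eqref{eq:tk1} enforces $d(t)^2\le-\xi m(t)$ on $[t_i,t_{i+1})$, so the comparison argument sketched after \eqref{eq:dm} keeps $m(t)<0$ throughout the interval, i.e., $m\in C^0([t_i,t_{i+1}];\mathbb R_-)$.

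\textbf{Main obstacle.} The only delicate point is the classical regularity claim $u\in C^1((t_i,t_{i+1})\times[0,1])$ and $u[t]\in C^2([0,1])$ despite the merely $L^2$ initial datum; this relies on the analytic-semigroup smoothing for $\tilde u$ together with boundary Schauder estimates. Once this is secured, uniqueness is immediate from the linearity of the PDE and ODEs, and the cascade structure induced by the piecewise-constant character of $U_{di}$ and $\hat\theta(t_i)$ trivializes the remainder.
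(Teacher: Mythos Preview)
Your cascade decomposition is sound and is essentially an unpacking of what the paper does by citation: the paper's proof consists of a one-line appeal to Proposition~4 in \cite{RathnayakeObserver} (itself built on Theorem~4.11 of \cite{Karafyllis2019Input}) for the PDE part with constant Robin input, followed by the observation that \eqref{eq:odef} and \eqref{eq:dm} then cascade. You make the underlying mechanism explicit via the boundary lift and analytic-semigroup smoothing, which is exactly the content those references encapsulate, so the two routes coincide in substance.

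One point deserves tightening. You assert that the traces $u(0,t)$ and $u(1,t)$ are continuous on $[t_i,t_{i+1}]$, with ``continuity at $t=t_i$ transferring from the $L^2$-continuity of $u[t]$.'' That inference is not valid: $L^2$-continuity of the profile does not control pointwise boundary values, and with merely $L^2$ initial data the traces need not extend continuously down to $t_i$. What your variation-of-constants formulas for $\zeta$ and $m$ actually require is only that $u(0,\cdot)$, $u(1,\cdot)$, and their squares be locally integrable near $t_i$; this follows from the analytic smoothing estimate $\|u[t]\|_{H^s}\le C(t-t_i)^{-s/2}\|u[t_i]\|$ for $s\in(1/2,1)$, which gives trace bounds of order $(t-t_i)^{-s/2}$ with $s<1$. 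With that correction the integral representations you wrote for $\zeta$ and $m$ are continuous at $t_i$ and the argument closes; the remaining regularity claim for $u$ on $(t_i,t_{i+1})\times[0,1]$ is handled exactly as you indicate.
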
}
\begin{proof}
According to Proposition 4 in \cite{RathnayakeObserver}, whose proof depends on Theorem 4.11 in \cite{Karafyllis2019Input}, and recalling \eqref{eq:dm}, \eqref{eq:odef}, Proposition \ref{13Pro:1} is obtained.
\end{proof}
\begin{figure}
\centering
\includegraphics[width=8cm]{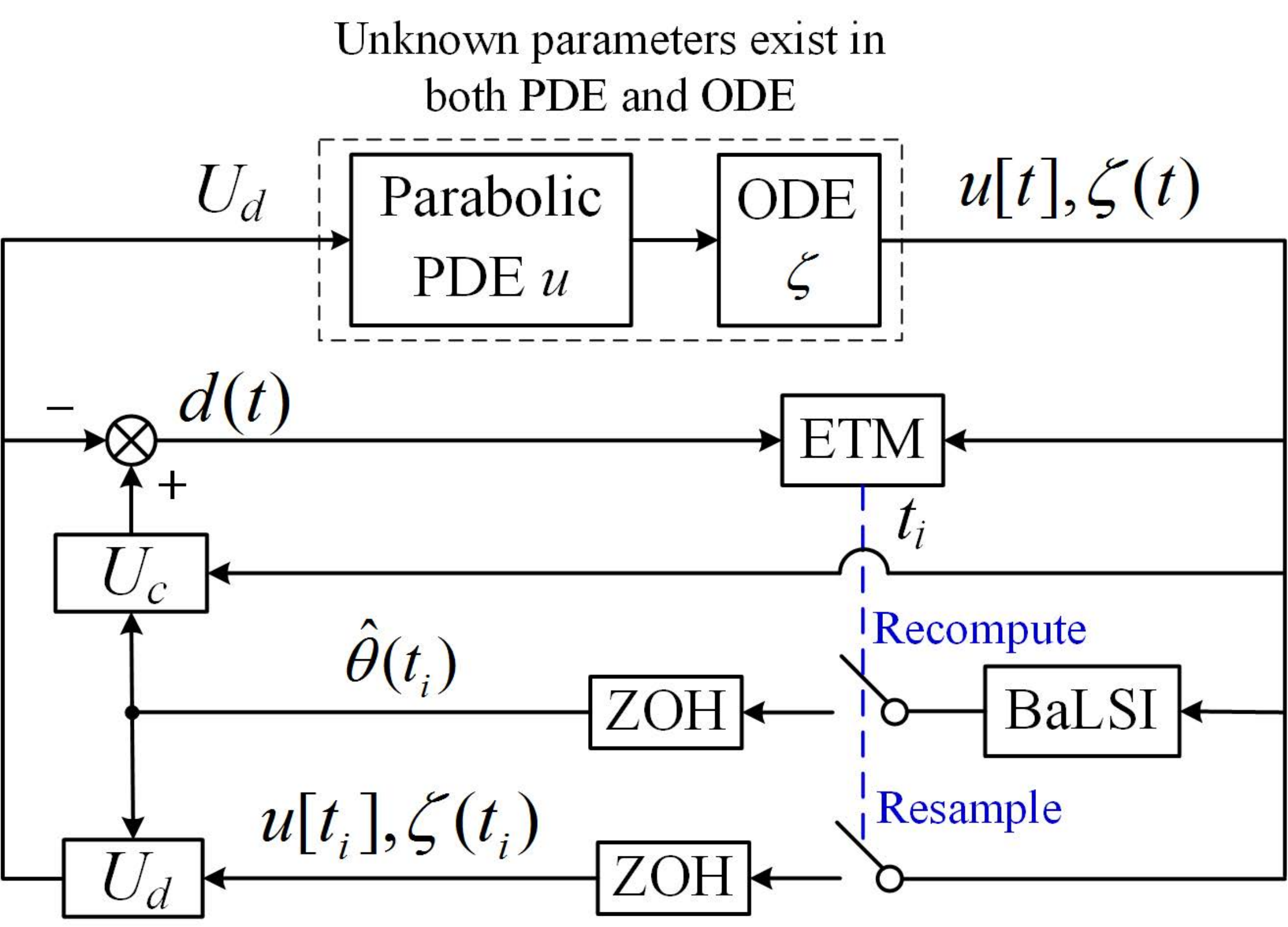}
\caption{Block diagram of the closed-loop system.}
\label{fig:1}
\end{figure}
\section{Main result}\label{13sec:sta}
{The block diagram of the closed-loop system is presented in Figure \ref{fig:1}. Before presenting the main result of this paper, we present the following technical lemmas first.}
\begin{lema}\label{lem:dd}
For $d(t)$ defined in \eqref{eq:d}, there exist positive constants ${\epsilon_1},{\epsilon_2},{\epsilon_3},{\epsilon_4},{\epsilon_5}$ such that
\begin{align}
\dot d(t)^2\le& {\epsilon_1} d(t)^2+{\epsilon_2}u(1,t)^2+{\epsilon_3} u(0,t)^2\notag\\
&+{\epsilon_4}\|u(\cdot,t)\|^2+{\epsilon_5}\zeta(t)^2\label{eq:dd}
\end{align}
for $t\in(t_i,t_{i+1})$, where ${\epsilon_1},{\epsilon_2},{\epsilon_3},{\epsilon_4},{\epsilon_5}$ only depend on the design parameter $\kappa$ in \eqref{eq:kapa}, the known plant parameters, and the known bounds $\overline a,\underline a, \overline \lambda, \underline \lambda$ of the unknown parameters.
\end{lema}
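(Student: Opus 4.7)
The plan is to obtain \eqref{eq:dd} by differentiating $d(t)$ in \eqref{eq:d} directly and turning the result into a pointwise linear estimate. On $(t_i,t_{i+1})$ the quantities $u(\cdot,t_i)$, $\zeta(t_i)$, $\hat\theta(t_i)$, and hence $U_{di}$, are constant, so the regularity supplied by Proposition \ref{13Pro:1} allows me to write
\begin{align*}
\dot d(t)=\int_0^1 K_1(1,y;\hat\theta(t_i))u_t(y,t)\,dy+K_2(1;\hat\theta(t_i))\dot\zeta(t),
\end{align*}
and then substitute $u_t=\varepsilon u_{yy}+\lambda u$ from \eqref{eq:ode2} and $\dot\zeta=a\zeta+bu(0,t)$ from \eqref{eq:ode1}.

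Next I would integrate the term $\varepsilon\int_0^1 K_1(1,y;\hat\theta(t_i))u_{yy}(y,t)\,dy$ by parts twice, transferring both $y$-derivatives onto the kernel. The boundary contribution at $y=0$ uses $u_y(0,t)=0$ from \eqref{eq:ode4} and vanishes; the boundary contribution at $y=1$ uses $u_y(1,t)=U_{di}-qu(1,t)$ from \eqref{eq:(1)}. The appearance of $U_{di}$ is then handled by the identity $U_{di}=U_c(t)-d(t)$, which follows from \eqref{eq:d}, so that the portion multiplied by $U_{di}$ generates one linear-in-$d(t)$ term plus contributions that repackage into $\|u(\cdot,t)\|$ and $\zeta(t)$ through $U_c(t)$. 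After this rearrangement, $\dot d(t)$ takes the form
\begin{align*}
\dot d(t)=A_1 d(t)+A_2 u(1,t)+A_3 u(0,t)+\int_0^1 A_4(y)u(y,t)\,dy+A_5\zeta(t),
\end{align*}
with coefficients $A_1,\ldots,A_5$ (and $A_4\in L^2(0,1)$) assembled from $K_1(1,1;\hat\theta(t_i))$, $K_{1,y}(1,0;\hat\theta(t_i))$, $K_{1,y}(1,1;\hat\theta(t_i))$, $K_{1,yy}(1,\cdot;\hat\theta(t_i))$, $K_2(1;\hat\theta(t_i))$ and the fixed parameters $\varepsilon,\lambda,a,b,q$.

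The crucial uniformity step is that $\hat\theta(t_i)$ is restricted to the compact box $\Theta$ by the projection in \eqref{eq:adaptivelaw}. The kernels $\Psi$ and $\gamma$ are explicit smooth functions of $\theta$ by \eqref{eq:Psi} and \eqref{eq:ga}, and $h(x,y)$ is piecewise $C^2$ with continuous dependence on $\theta$ through \eqref{eq:h1}--\eqref{eq:h4} as cited from \cite{Deutscher2018backstepping}, so $K_1(1,\cdot;\cdot)$ up to its second $y$-derivative and $K_2(1;\cdot)$ are uniformly bounded on $[0,1]\times\Theta$ by constants expressible in $\kappa$, $\underline\lambda,\overline\lambda,\underline a,\overline a$, and the known plant parameters. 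Applying Cauchy--Schwarz to the integral term and taking absolute values yields $|\dot d(t)|\le C_1|d(t)|+C_2|u(1,t)|+C_3|u(0,t)|+C_4\|u(\cdot,t)\|+C_5|\zeta(t)|$; squaring and using $(\sum_{j=1}^5 x_j)^2\le 5\sum_{j=1}^5 x_j^2$ delivers \eqref{eq:dd} with $\epsilon_j=5C_j^2$.

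The main obstacle is not any single inequality but the careful bookkeeping while performing the two integrations by parts and when substituting $U_{di}=U_c(t)-d(t)$ back into the boundary term. Every coefficient in the final estimate must be shown to depend on $\hat\theta(t_i)$ only through quantities that remain bounded uniformly over $\Theta$, so that the constants $C_1,\ldots,C_5$ can be chosen independently of the triggering index $i$; it is for this purpose that the projection set $\Theta$, the explicit kernel formulas, and the $C^2$ regularity of $h$ are all invoked.
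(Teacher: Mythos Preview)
Your proposal is correct and follows essentially the same route as the paper's proof: differentiate $d(t)$, substitute the plant dynamics \eqref{eq:ode1}--\eqref{eq:ode2}, integrate the $u_{yy}$ term by parts twice, insert the boundary relation \eqref{eq:(1)}, and bound all kernel-dependent coefficients uniformly over the compact set $\Theta$ via Cauchy--Schwarz. The only cosmetic difference is that the paper writes the boundary contribution through $u_x(1,t)=U(t)-p(t)-d(t)-qu(1,t)$ (i.e., \eqref{eq:newplant}) and then estimates $p(t)^2$ separately by \eqref{eq:pineq}, whereas your substitution $U_{di}=U_c(t)-d(t)$ collapses $U(t)-p(t)$ into $U_c(t)$ directly, producing five summands instead of six; the resulting $\epsilon_j$ differ from the paper's \eqref{eq:mu1}--\eqref{eq:mu5} only by harmless constant factors.
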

\begin{proof}
The event-triggered control input $U_d$ is constant on $t\in(t_i,t_{i+1})$, i.e., $\dot U_d(t)=0$. Taking the time derivative of \eqref{eq:d}, recalling \eqref{eq:ode1}--\eqref{eq:ode4}, we obtain that
\begin{align}
\dot d(t)=&\dot U_c(t)\notag\\
=&\int_0^1 K_1(1,y;\hat\theta(t_i))u_t(y,t)dy+K_2 (1;\hat\theta(t_i))\dot\zeta(t)\notag\\
=&\int_0^1 K_1(1,y;\hat\theta(t_i))\varepsilon u_{xx}(y,t)dy\notag\\&+\int_0^1 K_1(1,y;\hat\theta(t_i))\lambda u(y,t)dy\notag\\
&+K_2 (1;\hat\theta(t_i))a\zeta(t)+K_2 (1;\hat\theta(t_i))bu(0,t)\notag\\
=&K_1(1,1;\hat\theta(t_i))\varepsilon u_{x}(1,t)-K_{1y}(1,1;\hat\theta(t_i))\varepsilon u(1,t)\notag\\
&+K_{1y}(1,0;\hat\theta(t_i))\varepsilon u(0,t)\notag\\&+\int_0^1 K_{1yy}(1,y;\hat\theta(t_i))\varepsilon u(y,t)dy\notag\\&+\int_0^1 K_1(1,y;\hat\theta(t_i))\lambda u(y,t)dy\notag\\
&+K_2 (1;\hat\theta(t_i))a\zeta(t)+K_2 (1;\hat\theta(t_i))bu(0,t).\label{eq:dd1}
\end{align}
Applying  \eqref{eq:d}, \eqref{eq:p}, it allows us to rewrite \eqref{eq:(1)} as
\begin{align}
u_x(1,t)+qu(1,t)=U(t)-p(t)-d(t).\label{eq:newplant}
\end{align}
Inserting \eqref{eq:newplant} into \eqref{eq:dd1} to replace $u_{x}(1,t)$, we then have
\begin{align}
\dot d(t)=&K_1(1,1;\hat\theta(t_i))\varepsilon (-q u(1,t)+U(t)-p(t)-d(t))\notag\\
&-K_{1y}(1,1;\hat\theta(t_i))\varepsilon u(1,t)\notag\\
&+K_{1y}(1,0;\hat\theta(t_i))\varepsilon u(0,t)\notag\\&+\int_0^1 K_{1yy}(1,y;\hat\theta(t_i))\varepsilon u(y,t)dy\notag\\&+\int_0^1 K_1(1,y;\hat\theta(t_i))\lambda u(y,t)dy\notag\\
&+K_2 (1;\hat\theta(t_i))a\zeta(t)+K_2 (1;\hat\theta(t_i))bu(0,t)\notag\\
=&-K_1(1,1;\hat\theta(t_i))\varepsilon d(t)-K_1(1,1;\hat\theta(t_i))\varepsilon p(t)\notag\\
& -(qK_1(1,1;\hat\theta(t_i))\varepsilon+K_{1y}(1,1;\hat\theta(t_i))\varepsilon) u(1,t)\notag\\
&+(K_{1y}(1,0;\hat\theta(t_i))\varepsilon+K_2 (1;\hat\theta(t_i))b) u(0,t)\notag\\
&+\int_0^1 [K_{1yy}(1,y;\hat\theta(t_i))\varepsilon+K_1(1,y;\hat\theta(t_i))\lambda\notag\\&+K_1(1,1;\hat\theta(t_i))\varepsilon K_1(1,y;\theta)] u(y,t)dy\notag\\
&+(K_2 (1;\hat\theta(t_i))a+K_1(1,1;\hat\theta(t_i))\varepsilon K_2(1;\theta))\zeta(t),\label{eq:dde}
\end{align}
where \eqref{eq:U} has been used.

Applying the Cauchy-Schwarz inequality into \eqref{eq:p}, we have
\begin{align}
p(t)^2\le &2\max_{\vartheta_1,\vartheta_2\in\Theta}\left\{\int_0^1 (K_1(1,y;\vartheta_1)-K_1(1,y;\vartheta_2))^2dy\right\}\|u[t]\|^2\notag\\
\quad&+2\max_{\vartheta_1,\vartheta_2\in\Theta}\{(K_2 (1;\vartheta_1)-K_2 (1;\vartheta_2))^2\}\zeta(t)^2.\label{eq:pineq}
\end{align}
Applying the Cauchy-Schwarz inequality into \eqref{eq:dde}, using \eqref{eq:pineq}, we then obtain \eqref{eq:dd}, where
\begin{align}
{\epsilon_1}=&6\varepsilon^2\max_{\vartheta\in\Theta}\{K_1(1,1;\vartheta)^2\},\label{eq:mu1}\\
{\epsilon_2}=&6\max_{\vartheta\in\Theta}\{(qK_1(1,1;\vartheta)\varepsilon+K_{1y}(1,1;\vartheta)\varepsilon)^2\},\label{eq:mu2}\\
{\epsilon_3}=&6\max_{\vartheta\in\Theta}\{(K_{1y}(1,0;\vartheta)\varepsilon+K_2 (1;\vartheta)b)^2\},\\
{\epsilon_4}=&12\max_{\vartheta\in\Theta}\left\{\int_0^1 (K_{1yy}(1,y;\vartheta)\varepsilon+K_1(1,y;\vartheta)\lambda)^2dy\right\}\notag\\&+12\varepsilon^2\max_{\vartheta\in\Theta}\{K_1(1,1;\vartheta)^2\}\max_{\vartheta\in\Theta}\left\{\int_0^1  K_1(1,y;\vartheta)^2dy\right\}\notag\\&+12\varepsilon^2\max_{\vartheta\in\Theta}\{K_1(1,1;\vartheta)^2\}\notag\\&\quad\times\max_{\vartheta_1,\vartheta_2\in\Theta}\left\{\int_0^1 (K_1(1,y;\vartheta_1)-K_1(1,y;\vartheta_2))^2dy\right\},\\
{\epsilon_5}=&12{\bar a}^2\max_{\vartheta\in\Theta}\{(K_2 (1;\vartheta)^2\}\notag\\
&+12\varepsilon^2\max_{\vartheta\in\Theta}\{K_1(1,1;\vartheta)^2\} \max_{\vartheta\in\Theta}\left\{K_2(1;\vartheta)^2\right\}\notag\\
&+12\varepsilon^2\max_{\vartheta\in\Theta}\{K_1(1,1;\vartheta)^2\}\notag\\&\quad\times\max_{\vartheta_1,\vartheta_2\in\Theta}\{(K_2 (1;\vartheta_1)-K_2 (1;\vartheta_2))^2\}.\label{eq:mu5}
\end{align}
The proof of Lemma \ref{lem:dd} is complete.
\end{proof}
Relying on Lemma \ref{lem:dd}, we present the following lemma which shows that the minimal dwell-time is a positive constant.
\begin{lema}\label{lem:dwell}
For some positive $\kappa_1,\kappa_2,\kappa_3,\kappa_4$, there exists a minimal dwell-time $\underline\tau>0$  such that $t_{i+1}-t_i\ge\underline\tau$ for all $i\in \mathbb Z^+$.
\end{lema}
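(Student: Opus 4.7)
The plan is to normalize by $-m$: define $\phi(t)=d(t)^2/(-m(t))$, which is well-defined since $m(t)<0$ throughout. Because the controller resamples the state at each $t_i$ so that $U_d(t_i)=U_c(t_i)$, we have $d(t_i)=0$ and therefore $\phi(t_i^+)=0$. On the other hand, the ETM \eqref{eq:tk1} forces $\phi(t)<\xi$ for all $t\in(t_i,t_{i+1})$, and an event caused by the ETM condition (as opposed to the maximum dwell time $T$) corresponds exactly to $\phi$ reaching $\xi$ from below. It therefore suffices to produce a scalar comparison $\dot\phi\le F(\phi)$ with $F$ continuous and positive on $[0,\xi]$: separation of variables then yields $t_{i+1}-t_i\ge\int_0^\xi d\phi/F(\phi)>0$ for events caused by the ETM, while events caused by the maximum dwell time give $t_{i+1}-t_i=T>0$ directly, so the minimal dwell time is $\underline\tau=\min\{T,\int_0^\xi d\phi/F(\phi)\}>0$.

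To derive $F$, I would compute $\dot\phi=(d^2)_t/(-m)+d^2\dot m/m^2$, apply Young's inequality $(d^2)_t=2d\dot d\le d^2+\dot d^2$ together with the bound on $\dot d(t)^2$ supplied by Lemma \ref{lem:dd}, and substitute the explicit ODE \eqref{eq:dm} for $\dot m$. After regrouping like terms, this yields
\begin{align*}
\dot\phi\le (1+\epsilon_1+\eta)\,\phi+\lambda_d\phi^2+\sum_{j=1}^{4}\frac{X_j(t)^2}{-m(t)}\bigl(\epsilon_{j+1}-\kappa_j\phi\bigr),
\end{align*}
where $X_1,\ldots,X_4$ denote $u(1,t),u(0,t),\|u(\cdot,t)\|,\zeta(t)$ respectively; the constants $\epsilon_2,\ldots,\epsilon_5$ are those of Lemma \ref{lem:dd} and $\kappa_1,\ldots,\kappa_4$ are the ETM design gains.

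The central technical step is to fix the design gains $\kappa_j$ large enough --- concretely $\kappa_j\ge\epsilon_{j+1}/\xi$ --- so that each bracket $(\epsilon_{j+1}-\kappa_j\phi)$ is nonpositive at the ETM threshold $\phi=\xi$, and so that the residual positive contributions of the form $\epsilon_{j+1}X_j^2/(-m)$ on the interior of $[0,\xi]$ can be dominated by the negative contributions $\kappa_j\phi\,X_j^2/(-m)$ via Young-type manipulations together with the ETM constraint $\phi<\xi$. The target output is a pure ODE bound of the form $\dot\phi\le F(\phi)=A\phi^2+B\phi+C$ with positive constants $A,B,C$ depending only on the design parameters and on the known bounds on the unknown parameters $\lambda,a$.

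The obstacle I anticipate is precisely this absorption step: the ratios $X_j(t)^2/(-m(t))$ are not a priori bounded, and converting the inequality into one involving $\phi$ alone is the delicate point where the specific magnitudes of the $\kappa_j$'s matter and where additional care is required to ensure that $F$ is finite and strictly positive on $[0,\xi]$. Once $F$ is secured, the separation-of-variables argument applied on $(t_i,t_{i+1})$ together with the $\min$ with $T$ in \eqref{eq:tk1} yields an explicit, state-independent lower bound $\underline\tau>0$ for every $i\in\mathbb{Z}^+$, finishing the proof.
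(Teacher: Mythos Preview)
Your overall strategy coincides with the paper's: normalize by $-m$, derive a scalar differential inequality for the ratio, and integrate. The paper uses the affinely equivalent variable $\psi=\dfrac{d^2+\tfrac12\xi m}{-\tfrac12\xi m}$, so that $\phi=\tfrac{\xi}{2}(\psi+1)$; your formula for $\dot\phi$ matches the paper's computation after this change of variable, and your handling of the case split with the maximum dwell time $T$ is correct.

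However, the step you flag as an ``obstacle'' is a genuine gap, and your stated resolution cannot work. The target bound $\dot\phi\le A\phi^2+B\phi+C$ with a finite constant $C$ is unattainable on all of $[0,\xi]$: near $\phi=0$ each bracket $(\epsilon_{j+1}-\kappa_j\phi)$ equals $\epsilon_{j+1}>0$, while the prefactors $X_j(t)^2/(-m(t))$ are not a priori bounded, so no choice of $\kappa_j$ and no Young-type rearrangement can absorb them into a constant. The ETM constraint $\phi<\xi$ is only an upper bound on $\phi$ and gives no leverage here; your condition $\kappa_j\ge\epsilon_{j+1}/\xi$ makes the bracket nonpositive only at the single endpoint $\phi=\xi$.

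The missing idea, which the paper supplies, is to abandon the interval $[0,\xi]$ and bound $\dot\phi$ only on a subinterval bounded away from zero. Concretely, take $\kappa_j\ge 2\epsilon_{j+1}/\xi$ (twice your threshold); then on $\{\phi\ge\xi/2\}$ every bracket satisfies $\epsilon_{j+1}-\kappa_j\phi\le 0$, the entire $X_j$ sum is nonpositive and can be dropped, and you obtain $\dot\phi\le(1+\epsilon_1+\eta)\phi+\lambda_d\phi^2$ there. Since $\phi(t_i)=0<\xi/2$ and an ETM-triggered event requires $\phi$ to reach $\xi$, the time spent traversing $[\xi/2,\xi]$ is already a lower bound on $t_{i+1}-t_i$, and $\int_{\xi/2}^{\xi}\frac{d\phi}{(1+\epsilon_1+\eta)\phi+\lambda_d\phi^2}>0$ gives the desired $\underline\tau_a$. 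The paper implements exactly this by restricting to $\psi\in[0,1]$ (equivalently $\phi\in[\xi/2,\xi]$), invoking the intermediate value theorem to extract that portion of the trajectory.
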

\begin{proof}
1) If the event is triggered by the second condition, i.e., $t_{i+1}=t_i+T$, in \eqref{eq:tk1}, it is obvious that the dwell-time is $T>0$.

2) Next, we consider the case that the event is triggered by the first condition in \eqref{eq:tk1}.
Define the following  function $\psi(t)$:
\begin{align}
\psi(t)  = \frac{{d{{(t)}^2} + \frac{1}{2}\xi m(t)}}{{- \frac{1}{2}\xi m(t)}},\label{eq:psi}
\end{align}
which was originally introduced in \cite{Espitia2018}. We have that $\psi(t_{i+1}) =1$ because the event is triggered,  and that $\psi(t_{i}) <0$ because of $d(t_i)=0$ according to \eqref{eq:d}.
The function $\psi(t)$ is a continuous function on $[t_i^+,t_{i+1}^-]$ recalling Proposition \ref{13Pro:1} and \eqref{eq:d}. By the intermediate value
theorem, there exists $t^*>t_i$ such that $\psi(t)\in[0,1]$ when $t\in[t^*,t_{i+1}^-]$. The minimal dwell-time can be founded as the minimal time it takes for $\psi(t)$ from 0 to 1.

Taking the derivative of \eqref{eq:psi} for all $t\in[t^*,t_{i+1})$,  applying Young's inequality, using \eqref{eq:dd} in Lemma \ref{lem:dd}, and inserting  \eqref{eq:dm},  we  have
\begin{align}
\dot \psi  = &\frac{{2d(t)\dot d(t) + \frac{1}{2}{\xi}\dot m(t)}}{{- \frac{1}{2}\xi m(t)}} - \frac{{\dot m(t)}}{{  m(t)}}\psi\notag \\
 \le & \frac{1}{{- \frac{1}{2}\xi m(t)}} \bigg[{\epsilon_1}d(t)^2+{\epsilon_2} u(1,t)^2+{\epsilon_3} u(0,t)^2\notag\\&+{\epsilon_4}\|u(\cdot,t)\|^2+{\epsilon_5}\zeta(t)^2+ d{{(t)}^2} - \frac{1}{2}{\xi}\eta m(t)\notag\\
  &+\frac{1}{2}{\xi}\lambda_d d(t)^2- \frac{1}{2}{\xi}{\kappa _1}{u}{(1,t)^2}
   - \frac{1}{2}{\xi}{\kappa _2}u{(0,t)^2}\notag\\&- \frac{1}{2}{\xi}{\kappa _3}\|{u}{(\cdot,t)\|^2}- \frac{1}{2}{\xi}{\kappa _4}{\zeta}{(t)^2}\bigg]-\frac{\lambda_d d(t)^2}{m(t)}\psi+\eta\psi\notag\\
&- \frac{- {\kappa _1}{u}{(1,t)^2} - {\kappa _2}u{(0,t)^2}- {\kappa _3}\|{u}{(\cdot,t)\|^2}- {\kappa _4}{\zeta}{(t)^2}}{  m(t)}\psi.\label{eq:dpsi}
 \end{align}
It is worth pointing out that the last term in \eqref{eq:dpsi} is less than zero.
Choose
\begin{align}
\kappa_1&\ge \frac{2{\epsilon_2}}{{\xi}},\\ \kappa_2&\ge \frac{2{\epsilon_3}}{{\xi}},\\
\kappa_3&\ge \frac{2{\epsilon_4}}{{\xi}},\\ \kappa_4&\ge \frac{2{\epsilon_5}}{{\xi}},
\end{align}
where ${\epsilon_2},{\epsilon_3},{\epsilon_4},{\epsilon_5}$ are given in \eqref{eq:mu2}--\eqref{eq:mu5}, which only depend on the design parameter $\kappa$ in \eqref{eq:kapa}, the known plant parameters, and the known bounds $\overline a,\underline a, \overline \lambda, \underline \lambda$ of the unknown parameters.

Then \eqref{eq:dpsi} becomes
 \begin{align}
 \dot \psi  \le & \frac{1}{{- \frac{1}{2}{\xi} m(t)}} \bigg[\left({\epsilon_1}+1+\frac{1}{2}{\xi}\lambda_d\right) d{{(t)}^2} - \frac{1}{2}{\xi}\eta m(t)\bigg]\notag\\&-\frac{\lambda_d d(t)^2}{m(t)}\psi+\eta\psi.\label{eq:dpsi1}
\end{align}
Inserting
\begin{align}
 \frac{d(t)^2}{{m(t)}} = &\frac{d(t)^2+\frac{1}{2}{\xi}m(t)-\frac{1}{2}{\xi}m(t)}{{m(t)}}\notag\\ =& - \frac{1}{2}{\xi}\left(\psi(t)+1\right),
 \end{align}
 we obtain from \eqref{eq:dpsi1} that
 \begin{align}
 \dot \psi\le  n_1 \psi^2+n_2 \psi+n_3,\label{eq:df}
\end{align}
where
\begin{align}
n_1&=\frac{1}{2}{\lambda_d}{\xi},\\
n_2&=1+{\epsilon_1}+{\xi}\lambda_d+\eta,\\
n_3&=1+\eta+{\epsilon_1}+\frac{1}{2}{\xi}\lambda_d
\end{align}
are positive constants.
It follows that  the lower bound of dwell-time in this case is
\begin{align}
\underline\tau_a=\int_0^1\frac{1}{n_1+n_2s+n_3s^2}ds>0.\label{eq:undertaua}
\end{align}
Together with the result in 1), we have that the minimal dwell-time $\underline\tau$ is
\begin{align}
\underline\tau=\min\{\underline\tau_a,T\}>0.\label{eq:undertau}
\end{align}
The proof of this Lemma is complete.
\end{proof}
It follows from Lemma \ref{lem:dwell} that no Zeno phenomenon occurs, i.e.,
$\lim_{i\to \infty} t_i=+\infty$.
\begin{coro}\label{col}
For all initial data $u[0]\in L^2(0,1)$, $\zeta(0)\in \mathbb R$, $m(0)\in \mathbb R_-$, there exist unique mappings   $u\in C^0(\mathbb R_+;L^2(0,1))\bigcap C^1(J\times[0,1])$ with $u[t]\in C^2([0,1])$, $\zeta\in C^0(\mathbb R_{+};\mathbb R)$, $m\in C^0(\mathbb R_{+};\mathbb R_{-})$, which satisfy \eqref{eq:ode2}, \eqref{eq:ode4}, \eqref{eq:(1)}, \eqref{eq:odef}, with \eqref{eq:dU}, \eqref{eq:dm} for $t>0$, where $J=\mathbb R_+\backslash\{t_i\ge 0, i=0,1,2,\ldots\}$.
\end{coro}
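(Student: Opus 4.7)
The plan is to build a global solution by concatenating the local solutions guaranteed by Proposition \ref{13Pro:1} across the successive triggering instants $t_i$, and then to invoke Lemma \ref{lem:dwell} to confirm that this patched solution covers all of $\mathbb{R}_+$. The argument is essentially a bootstrap: Proposition \ref{13Pro:1} gives a solution on each interval $[t_i,t_{i+1}]$, and the absence of Zeno behavior is what converts this into global existence.

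First I would invoke Proposition \ref{13Pro:1} on the initial interval $[t_0,t_1]=[0,t_1]$. The hypotheses are met since $u[0]\in L^2(0,1)$, $\zeta(0)\in\mathbb{R}$, and $m(0)\in\mathbb{R}_-$ are the prescribed initial data. This yields the unique triple $(u,\zeta,m)$ on $[0,t_1]$ with the regularity stated in the corollary. At the right endpoint we have $u[t_1]\in C^2([0,1])\subset L^2(0,1)$, $\zeta(t_1)\in\mathbb{R}$, and $m(t_1)\in\mathbb{R}_-$ (the last follows from the comparison argument mentioned in the text just after \eqref{eq:dm}, since $m$ satisfies a differential inequality that keeps it strictly negative given $m(0)<0$).

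Next I would proceed inductively: assuming the solution has been constructed on $[0,t_i]$ with $u[t_i]\in L^2(0,1)$, $\zeta(t_i)\in\mathbb{R}$, $m(t_i)\in\mathbb{R}_-$, a second application of Proposition \ref{13Pro:1} produces the unique extension on $[t_i,t_{i+1}]$. Compatibility at $t_i$ is immediate: by construction the control input $U_{di}$ is defined using $u[t_i],\zeta(t_i),\hat\theta(t_i)$, which matches the data used to re-initialize the local problem; the values $m(t_i^-)=m(t_i^+)$ coincide by the stipulation following \eqref{eq:dm}; and $u[t]$, $\zeta(t)$ are continuous at $t_i$ since they are the terminal values of the previous interval taken as initial values for the next. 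Uniqueness on $[0,t_j]$ for every $j$ follows from the uniqueness on each sub-interval combined with these matching conditions.

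Finally, Lemma \ref{lem:dwell} provides the crucial dwell-time bound $t_{i+1}-t_i\ge\underline\tau>0$, which gives $t_i\ge i\underline\tau\to+\infty$ as $i\to\infty$. Hence $\bigcup_{i\ge 0}[t_i,t_{i+1}]=\mathbb{R}_+$, and the concatenated maps $u$, $\zeta$, $m$ have the claimed global regularity on $\mathbb{R}_+$ and are $C^1$ on $J=\mathbb{R}_+\setminus\{t_i\}_{i\ge 0}$, where the exclusion of the triggering set reflects the jumps in $\dot U_d$ and $\dot m$ (which depends on $d(t)^2$ that is reset at every $t_i$). The principal technical point — ensuring the induction can indeed be iterated countably many times to cover all of $\mathbb{R}_+$ rather than stalling at a finite accumulation point — is exactly what the minimal dwell-time from Lemma \ref{lem:dwell} rules out, so there is no substantive obstacle beyond careful bookkeeping.
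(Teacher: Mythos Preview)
Your proposal is correct and follows essentially the same approach as the paper: the paper's proof simply states that the result is an immediate consequence of Proposition~\ref{13Pro:1} and Lemma~\ref{lem:dwell}, with the solution constructed ``by the step method'' iteratively between successive triggering times. Your argument is a more detailed version of exactly this strategy---local well-posedness on each $[t_i,t_{i+1}]$ via Proposition~\ref{13Pro:1}, concatenation with matching at the endpoints, and the dwell-time bound from Lemma~\ref{lem:dwell} to push $t_i\to\infty$ and cover $\mathbb{R}_+$.
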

\begin{proof}
It is an immediate consequence of Proposition \ref{13Pro:1} and
Lemma \ref{lem:dwell}. Indeed, the solution is constructed (by the step
method) iteratively between successive triggering times.
\end{proof}
\begin{lema}\label{cl:nsQ0}
The sufficient and necessary conditions of $Q_{n,1}(\mu_{i+1},t_{i+1})=0$, $Q_{n,3}(\mu_{i+1},t_{i+1})=0$ for $n=1,2,\ldots$, are $u[t]= 0$, $\zeta(t)= 0$ on $t\in[\mu_{i+1},t_{i+1}]$, respectively.
\end{lema}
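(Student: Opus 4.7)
The plan is to prove each equivalence separately, since $Q_{n,1}$ depends only on the PDE state $u$ while $Q_{n,3}$ depends only on the ODE state $\zeta$, and the two are decoupled in these definitions.

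For the easier (sufficiency) direction, I would just substitute: if $u[t]\equiv 0$ on $[\mu_{i+1},t_{i+1}]$, then $g_{n,1}(t,\mu_{i+1})\equiv 0$ by \eqref{eq:gq1}, hence $Q_{n,1}(\mu_{i+1},t_{i+1})=0$ for every $n$; analogously $\zeta\equiv 0$ on the interval makes $g_{n,2}\equiv 0$ by \eqref{eq:gq2}, hence $Q_{n,3}=0$.

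For the necessity direction for $Q_{n,1}$, I would argue as follows. By Proposition \ref{13Pro:1} and Corollary \ref{col}, $u\in C^0([\mu_{i+1},t_{i+1}];L^2(0,1))$, so $t\mapsto g_{n,1}(t,\mu_{i+1})$ is continuous on $[\mu_{i+1},t_{i+1}]$. Since $Q_{n,1}=\int_{\mu_{i+1}}^{t_{i+1}} g_{n,1}(t,\mu_{i+1})^2\,dt=0$ and the integrand is nonnegative and continuous, we obtain $g_{n,1}(t,\mu_{i+1})\equiv 0$ on $[\mu_{i+1},t_{i+1}]$. Differentiating in $t$ (using continuity of $\tau\mapsto\int_0^1 \sin(x\pi n)u(x,\tau)dx$) gives
\begin{align*}
\int_0^1 \sin(x\pi n)u(x,t)dx=0,\quad t\in[\mu_{i+1},t_{i+1}],
\end{align*}
for every $n\in\mathbb N$. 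Since $\{\sqrt{2}\sin(x\pi n)\}_{n=1}^\infty$ is a complete orthonormal basis of $L^2(0,1)$, all Fourier sine coefficients of $u[t]$ vanish, hence $u[t]=0$ in $L^2(0,1)$ for every $t\in[\mu_{i+1},t_{i+1}]$. The regularity $u[t]\in C^2([0,1])$ from Proposition \ref{13Pro:1} upgrades this to the pointwise statement $u(x,t)=0$ on $[0,1]\times[\mu_{i+1},t_{i+1}]$.

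For the necessity direction for $Q_{n,3}$, the same continuity/nonnegativity argument applied to $Q_{n,3}=\int_{\mu_{i+1}}^{t_{i+1}} g_{n,2}(t,\mu_{i+1})^2 dt=0$ yields $g_{n,2}(t,\mu_{i+1})\equiv 0$, i.e.\ $-\frac{1}{b}\varepsilon\pi n\int_{\mu_{i+1}}^t \zeta(\tau)d\tau=0$. Since $b\neq 0$ and $\varepsilon\pi n\neq 0$, this gives $\int_{\mu_{i+1}}^t \zeta(\tau)d\tau=0$ for all $t\in[\mu_{i+1},t_{i+1}]$. Differentiating in $t$ and using continuity of $\zeta$ from Corollary \ref{col} yields $\zeta(t)=0$ on $[\mu_{i+1},t_{i+1}]$, as claimed. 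I do not anticipate a serious obstacle here; the only subtlety is to make sure the completeness of $\{\sin(n\pi x)\}$ in $L^2(0,1)$ is invoked (rather than tied to the eigenfunctions of the PDE boundary conditions), and that the continuity of $g_{n,1}$, $g_{n,2}$ in $t$, needed to pass from ``$Q=0$'' to ``$g\equiv 0$'', is justified by Proposition \ref{13Pro:1}.
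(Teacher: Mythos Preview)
Your proposal is correct and follows essentially the same approach as the paper: continuity of $g_{n,1}$ (resp.\ $g_{n,2}$) from the regularity in Proposition~\ref{13Pro:1} forces $g_{n,1}\equiv 0$ (resp.\ $g_{n,2}\equiv 0$), differentiation in $t$ yields vanishing Fourier sine coefficients (resp.\ vanishing running integral of $\zeta$), and completeness of $\{\sqrt{2}\sin(n\pi x)\}$ in $L^2(0,1)$ (resp.\ continuity of $\zeta$) gives the conclusion. The paper treats the $Q_{n,3}$ case as ``straightforward,'' so your spelling out of that direction is simply more explicit but not different in substance.
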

\begin{proof}
Necessity: If $Q_{n,1}(\mu_{i+1},\tau_{i+1})=0$ for $n=1,2,\ldots$, then the definition \eqref{eq:Q1m} in conjunction with continuity of $g_{n,1}(t,\mu_{i+1})$ for $t\in[\mu_{i+1},\tau_{i+1}]$ (a consequence of definition \eqref{eq:gq1} and the fact that $u\in C^0([\mu_{i+1},\tau_{i+1}];L^2(0,1))$) implies
\begin{align}
g_{n,1}(t,\mu_{i+1})=0,~~ t\in[\mu_{i+1},\tau_{i+1}].\label{eq:gq10}
\end{align}
According to the definition \eqref{eq:gq1} and continuity of the mapping $\tau\to \int_0^{1}\sin({x\pi n})z[\tau]dx$ (a consequence of the fact that $u\in C^0([\mu_{i+1},\tau_{i+1}];L^2(0,1))$, \eqref{eq:gq10} implies
\begin{align}
\int_0^{1}\sin({x\pi n})u(x,\tau)dx=0, ~~\tau\in[\mu_{i+1},\tau_{i+1}]
\end{align}
for $n=1,2,\ldots$.
Since the set $\{\sqrt{2}\sin(n\pi x):n=1,2,\ldots\}$ is an orthonormal basis of $L^2(0,1)$, we have $u[t]=0$ for $t\in[\mu_{i+1},\tau_{i+1}]$.

Sufficiency: If $u[t]=0$ on $t\in[\mu_{i+1},\tau_{i+1}]$, then $Q_{n,1}(\mu_{i+1},\tau_{i+1})=0$ for $n=1,2,\ldots$ is obtained directly, according to \eqref{eq:gq1}, \eqref{eq:Q1m}.

By recalling \eqref{eq:gq2}, \eqref{eq:Q3m},  the fact that the sufficient and necessary condition of $Q_{3n}(\mu_{i+1},t_{i+1})=0$ for $n=1,2,\ldots$ is $\zeta(t)=0$ on $t\in[\mu_{i+1},t_{i+1}]$, is obtained straightforwardly.

The proof of Lemma \ref{cl:nsQ0} is complete.\end{proof}
\begin{lema}\label{13lem:theta}
For the adaptive estimates defined by \eqref{eq:adaptivelaw} based on the data in the interval $t\in[\mu_{i+1},t_{i+1}]$, the following statements hold:

If $u[t]$ (or $\zeta(t)$) is not identically zero for $t\in[\mu_{i+1},t_{i+1}]$, then $\hat \lambda(t_{i+1})=\lambda$ (or $\hat a(t_{i+1})=a$).

If $u[t]$ (or $\zeta(t)$) is  identically zero for $t\in[\mu_{i+1},t_{i+1}]$, then $\hat \lambda(t_{i+1})=\hat \lambda(t_{i})$ (or $\hat a(t_{i+1})=\hat a(t_{i})$).
\end{lema}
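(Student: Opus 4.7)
The plan is to realize $\hat\theta(t_{i+1})$ as the Euclidean projection of $\hat\theta(t_i)$ onto the feasible set
\begin{align*}
F := \bigl\{\ell \in \Theta : Z_n(\mu_{i+1},t_{i+1}) = G_n(\mu_{i+1},t_{i+1})\ell,\ n \in \mathbb{N}\bigr\},
\end{align*}
which is convex, closed, and contains $\theta$ by \eqref{eq:Fer} together with Assumption \ref{as:coe1}, so the projection is well-defined and unique, and the lemma becomes a statement about the structure of $F$.

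First I would give a pointwise characterization of $F$. Since $\theta \in F$, every $\ell \in F$ obeys $G_n(\ell-\theta) = 0$ for all $n$. Because $G_n$ is exactly the Gram matrix of $g_{n,1}(\cdot,\mu_{i+1})$ and $g_{n,2}(\cdot,\mu_{i+1})$ in $L^2(\mu_{i+1},t_{i+1})$, this kernel condition is equivalent to
\begin{align*}
(\ell_1 - \lambda)\,g_{n,1}(t,\mu_{i+1}) + (\ell_2 - a)\,g_{n,2}(t,\mu_{i+1}) = 0
\end{align*}
for every $t\in[\mu_{i+1},t_{i+1}]$ and every $n\in\mathbb N$. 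Combined with Lemma \ref{cl:nsQ0} and \eqref{eq:gq1}--\eqref{eq:gq2}, one reads off the two equivalences: $g_{n,1}\equiv 0$ on $[\mu_{i+1},t_{i+1}]$ for all $n$ iff $u[\cdot]\equiv 0$ there, and $g_{n,2}\equiv 0$ for some (equivalently, every) $n\ge 1$ iff $\zeta(\cdot)\equiv 0$ there, since the $n$-prefactor does not affect vanishing and $\zeta$ is continuous.

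Next I split into four cases. Whenever $u\equiv 0$, the pointwise constraint reduces to $(\ell_2-a)g_{n,2}(t)\equiv 0$, which is vacuous when $\zeta\equiv 0$ and forces $\ell_2=a$ when $\zeta\not\equiv 0$; in both sub-cases $\ell_1$ is free in $\Theta$, so the projection returns $\hat\lambda(t_{i+1})=\hat\lambda(t_i)$, together with $\hat a(t_{i+1})=a$ or $\hat a(t_{i+1})=\hat a(t_i)$, respectively. The symmetric sub-case $u\not\equiv 0,\ \zeta\equiv 0$ forces $\ell_1=\lambda$ and leaves $\ell_2$ free, giving $\hat\lambda(t_{i+1})=\lambda$ and $\hat a(t_{i+1})=\hat a(t_i)$. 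These three sub-cases already cover both bullets of the lemma whenever at least one exciting signal is inactive.

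The main obstacle is the case $u\not\equiv 0$ and $\zeta\not\equiv 0$, where both terms of the constraint are live and one must exploit the $n$-dependence of $g_{n,2}(t,\mu_{i+1}) = -(\varepsilon\pi n/b)\int_{\mu_{i+1}}^t\zeta(\tau)\,d\tau$. Writing $U_n(\tau)=\int_0^1\sin(\pi n x)u(x,\tau)\,dx$ and assuming for contradiction that $\ell_1\neq\lambda$, differentiation of the pointwise constraint in $t$ yields
\begin{align*}
U_n(\tau) = \alpha\,n\,\zeta(\tau),\qquad \alpha := \frac{(\ell_2-a)\,\varepsilon\pi}{(\ell_1-\lambda)\,b},
\end{align*}
for every $n\ge 1$ and every $\tau\in[\mu_{i+1},t_{i+1}]$, with $\alpha$ independent of $n$. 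Since $u[\tau]\in L^2(0,1)$, Parseval in the orthonormal basis $\{\sqrt{2}\sin(\pi n x)\}_{n\ge 1}$ gives $\sum_n U_n(\tau)^2 < \infty$; picking $\tau$ with $\zeta(\tau)\neq 0$, available because $\zeta\not\equiv 0$, forces $\alpha = 0$, hence $\ell_2 = a$. The original constraint then collapses to $(\ell_1-\lambda)g_{n,1}(t)\equiv 0$, and Lemma \ref{cl:nsQ0} yields $\ell_1 = \lambda$, contradicting $\ell_1\neq\lambda$. Thus $F = \{\theta\}$ in this case, $\hat\theta(t_{i+1}) = \theta$, and both $\hat\lambda(t_{i+1}) = \lambda$ and $\hat a(t_{i+1}) = a$ hold. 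The delicate step is precisely this square-summability argument: it is the only mechanism that decouples $\lambda$ and $a$ when $u$ and $\zeta$ are simultaneously nontrivial.
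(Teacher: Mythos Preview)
Your proof is correct and shares the paper's four-case decomposition, treating the three degenerate cases essentially as the paper does. The substantive divergence is in the case $u\not\equiv 0$, $\zeta\not\equiv 0$. You reach the pointwise relation $U_n(\tau)=\alpha\,n\,\zeta(\tau)$ directly from the Gram-matrix kernel characterization of $G_n(\ell-\theta)=0$; the paper arrives at the same relation through a longer detour involving an auxiliary set $S_{ai}$, a determinant condition $Q_{n,2}^2=Q_{n,1}Q_{n,3}$, and the equality case of Cauchy--Schwarz. From that relation you finish with a Parseval argument: since $\sum_n U_n(\tau)^2<\infty$ while $\sum_n n^2=\infty$, choosing $\tau$ with $\zeta(\tau)\neq 0$ forces $\alpha=0$. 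The paper instead differentiates once more in time, substitutes the plant dynamics $u_t=\varepsilon u_{xx}+\lambda u$ and $\dot\zeta=a\zeta+bu(0,\cdot)$, integrates by parts, and extracts a contradiction from the $n^2$-term by comparing two distinct odd (or even) indices. Your route is shorter and model-independent, using only that $u[\tau]\in L^2(0,1)$; the paper's route is heavier but illustrates how the specific PDE--ODE structure itself obstructs nontrivial elements of the feasible set.
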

\begin{proof}
First, we define a set as follows,
\begin{align}
&S_{i}:=\bigg\{\bar \ell=(\ell_1,\ell_2)^T\in \Theta: Z_n(\mu_{i+1},t_{i+1})=G_n(\mu_{i+1},t_{i+1})\bar \ell,\notag\\
&~~~~~~~~~~n=1,2,\ldots,\bigg\},~~i\in\mathbb Z^+,\label{eq:setadaptivelaw1}
\end{align}
where ${\Theta=\{\bar \ell\in \mathbb R^2:\underline \lambda\le \ell_1\le \overline \lambda ,\underline a\le \ell_2\le \overline a\}}$, and $Z_n$, $G_n$ in \eqref{eq:bZ}, \eqref{eq:bG} are associated with the plant states over a time interval $[\mu_{i+1},t_{i+1}]$.

We prove the following four claims, from which the statements in this lemma are concluded.
\begin{clam}\label{cl:lem4a}
If $u[t]$ is not identically zero and $\zeta(t)$ is identically zero on $t\in[\mu_{i+1},t_{i+1}]$, then $\hat \lambda(t_{i+1})=\lambda$, $\hat a(t_{i+1})=\hat a(t_{i})$.
\end{clam}
\begin{proof}
Because $u[t]$ is not identically zero and $\zeta(t)$ is identically zero on $t\in[\mu_{i+1},\tau_{i+1}]$, there exists $n\in{\mathbb N}$ such that $Q_{n,1}(\mu_{i+1},\tau_{i+1})\neq0$ recalling Lemma \ref{cl:nsQ0}. Define the index set $I$ to be the set of all $n\in{\mathbb N}$ with $Q_{n,1}(\mu_{i+1},\tau_{i+1})\neq0$. According to \eqref{eq:gq2} and $\zeta(t)$ being identically zero on $t\in[\mu_{i+1},\tau_{i+1}]$, we know that $g_{n,2}(t,\mu_{i+1})=0$ on $t\in[\mu_{i+1},\tau_{i+1}]$ for all $n\in \mathbb N$. It follows that $Q_{n,2}(\mu_{i+1},\tau_{i+1})=0$, $Q_{n,3}(\mu_{i+1},\tau_{i+1})=0$, $H_{n,2}(\mu_{i+1},\tau_{i+1})=0$ for all $n\in \mathbb N$ recalling \eqref{eq:Q2m}, \eqref{eq:Q3m} and \eqref{eq:H2m}.
Recalling \eqref{eq:bZ}, \eqref{eq:bG}, then \eqref{eq:setadaptivelaw1} implies  $S_i=\{(\ell_1,\ell_2)\in \Theta:\ell_1=\frac{H_{n,1}(\mu_{i+1},\tau_{i+1})}{Q_{n,1}(\mu_{i+1},\tau_{i+1})}, n\in I\}$. Because $(q_1,q_2)\in S_i$ according to \eqref{eq:Fer}, it follows that $S_i=\{(q_1,\ell_2)\in \Theta:\underline a\le\ell_2\le \overline a\}$. Therefore, \eqref{eq:adaptivelaw} shows that $\hat \lambda(\tau_{i+1})=\lambda$ and $\hat a(\tau_{i+1})=\hat a(\tau_{i})$.
\end{proof}
\begin{clam} If $u[t]$ is  identically zero and $\zeta(t)$ is not identically zero on $t\in[\mu_{i+1},t_{i+1}]$, then  $\hat \lambda(t_{i+1})=\hat \lambda(t_{i})$, $\hat a(t_{i+1})=a$.
\end{clam}
\begin{proof}
The proof of this claim is very similar to the proof of Claim \ref{cl:lem4a}, and thus it is omitted.
\end{proof}
\begin{clam}
If $u[t]$, $\zeta(t)$ are identically zero on $t\in[\mu_{i+1},t_{i+1}]$, then $\hat \lambda(t_{i+1})=\hat \lambda(t_{i})$, $\hat a(t_{i+1})=\hat a(t_{i})$.
\end{clam}
\begin{proof}
In this case, $Q_{n,1}(\mu_{i+1},t_{i+1})=0$,  $Q_{n,2}(\mu_{i+1},t_{i+1})=0$, $Q_{n,3}(\mu_{i+1},t_{i+1})=0$, $H_{n,1}(\mu_{i+1},t_{i+1})=0$, $H_{n,2}(\mu_{i+1},t_{i+1})=0$ for all $n\in \mathbb N$ according to \eqref{eq:gq1}, \eqref{eq:gq2}, \eqref{eq:H1m}--\eqref{eq:Q3m}.
It follows that {$S_i=\Theta$}, and then \eqref{eq:adaptivelaw} shows that $\hat \lambda(t_{i+1})=\hat \lambda(t_{i})$, $\hat a(t_{i+1})=\hat a(t_{i})$.
\end{proof}
\begin{clam}\label{cl:lem4d}
If both $u[t]$ and $\zeta(t)$ are not identically zero on $t\in[\mu_{i+1},t_{i+1}]$, then $\hat \lambda(t_{i+1})=\lambda$, $\hat a(t_{i+1})=a$.
\end{clam}

\begin{proof}
By virtue of \eqref{eq:Fer}, \eqref{eq:adaptivelaw}, if $S_i$ is a singleton then it is nothing else but the least-squares estimate of the unknown vector of parameters $(\lambda,a)$ on the interval $[\mu_{i+1},t_{i+1}]$, and $S_i=\{(\lambda,a)\}$.
From \eqref{eq:bZ}, \eqref{eq:bG}, \eqref{eq:setadaptivelaw1}, we have that
\begin{align}
&S_i\subseteq S_{ai}:=\bigg\{(\ell_1,\ell_2)\in \Theta: \ell_2=\frac{H_{n,2}(\mu_{i+1},t_{i+1})}{Q_{n,3}(\mu_{i+1},t_{i+1})}\notag\\
&-\ell_1\frac{Q_{n,2}(\mu_{i+1},t_{i+1})}{Q_{n,3}(\mu_{i+1},t_{i+1})}, n=1,2,\cdots\bigg\}.\label{13eq:S2}
\end{align}
We next prove by contradiction that $S_i=\{(\lambda,a)\}$.
Suppose that on the contrary $S_i\neq \{(\lambda,a)\}$, i.e., $S_i$ defined by \eqref{eq:setadaptivelaw1} is not a singleton, which implies the set $S_{ai}$ defined by \eqref{13eq:S2} are not singletons (because $S_{ai}$ being a singleton implies that $S_i$ is a singleton). It follows that there exist constants $\bar r\in\mathbb R$ such that
\begin{align}
\frac{Q_{n,2}(\mu_{i+1},t_{i+1})}{Q_{n,3}(\mu_{i+1},t_{i+1})}=\bar r, ~n\in \mathbb N,\label{13eq:Q1}
 \end{align}
because if there were two different indices $k_1,k_2\in \mathbb N$ with $\frac{Q_{k_1,2}(\mu_{i+1},t_{i+1})}{Q_{k_1,3}(\mu_{i+1},t_{i+1})}\neq \frac{Q_{k_2,2}(\mu_{i+1},t_{i+1})}{Q_{k_2,3}(\mu_{i+1},t_{i+1})}$, then the set $S_{ai}$ defined by \eqref{13eq:S2} would be a singleton.

Moreover, since $S_i$ is not a singleton, the definition \eqref{eq:setadaptivelaw1} implies
\begin{align}
Q_{n,2}(\mu_{i+1},t_{i+1})^2=Q_{n,1}(\mu_{i+1},t_{i+1})Q_{n,3}(\mu_{i+1},t_{i+1})\label{13eq:Q2}
\end{align}
for all $n\in \mathbb N$ by recalling \eqref{eq:bG}. According to \eqref{eq:Q1m}--\eqref{eq:Q3m}, and the fact that the Cauchy-Schwarz inequality holds as  equality only when two functions are linearly dependent,  we obtain the existence of constants $\check\mu_{n}$ such that
\begin{align}
&g_{{n},1}(t,\mu_{i+1})=\check\mu_{n} g_{{n},2}(t,\mu_{i+1}), ~~n\in \mathbb N\label{13eq:qm2}
\end{align}
for $t\in[\mu_{i+1},t_{i+1}]$ ($g_{{n},2}(t,\mu_{i+1})$ are not identically zero on $t\in[\mu_{i+1},t_{i+1}]$).

Recalling \eqref{13eq:Q1}, we obtain from \eqref{eq:Q1m}--\eqref{eq:Q3m} and \eqref{13eq:qm2} that
\begin{align}
g_{n,1}(t,\mu_{i+1})&=\overline\mu g_{n,2}(t,\mu_{i+1}),~~ \overline\mu\neq 0,~~n\in \mathbb N\label{13eq:bl}
\end{align}
for $t\in[\mu_{i+1},t_{i+1}]$. The reason of the constant $\overline\mu\neq 0$ is given as follows. According to Lemma \ref{cl:nsQ0}, there exists $n_1\in{\mathbb N}$ such that ${Q_{n_1,1}(\mu_{i+1},t_{i+1})}\neq 0$. Hence, $g_{n_1,1}(t,\mu_{i+1})$ is not identically zero on $[\mu_{i+1},t_{i+1}]$.

Equations \eqref{13eq:bl} holding is a necessary condition of the hypothesis that $S_i$ is not a singleton.
Recalling \eqref{eq:gq1}, \eqref{eq:gq2}, and Proposition \ref{13Pro:1}, the fact that the equation \eqref{13eq:bl} holds implies
\begin{align}
\int_0^{1}\sin({x\pi n})u(x,t)dx+\frac{1}{b}\overline\mu\varepsilon \pi n\zeta(t)=0,\label{eq:c}
\end{align}
for $t\in(\mu_{i+1},t_{i+1})$, $x\in[0,1]$, $n\in \mathbb N$.

Taking the time derivative of \eqref{eq:c}, we have that
\begin{align}
&\int_0^{1}\sin({x\pi n})u_t(x,t)dx+\frac{1}{b}\overline\mu\varepsilon \pi n(a\zeta(t)+bu(0,t))\notag\\
=&\int_0^{1}\sin({x\pi n})(\varepsilon u_{xx}(x,t)+\lambda u(x,t))dx\notag\\&+\frac{1}{b}\overline\mu\varepsilon \pi n(a\zeta(t)+bu(0,t))\notag\\
=&-(-1)^n\pi n\varepsilon u(1,t)+\pi n\varepsilon u(0,t)\notag\\&-\int_0^{1}\pi^2 n^2\sin({x\pi n})\varepsilon u(x,t)dx\notag\\&+\int_0^{1}\sin({x\pi n})\lambda u(x,t)dx\notag\\&+\frac{1}{b}\overline\mu\varepsilon \pi na\zeta(t)+\overline\mu\varepsilon \pi nu(0,t)\notag\\
=&-(-1)^n\pi n\varepsilon u(1,t)+n(\pi \varepsilon+\overline\mu\varepsilon \pi ) u(0,t)\notag\\&-\frac{1}{b}\overline\mu\varepsilon \pi n(\lambda-a-\pi^2 n^2\varepsilon) \zeta(t)\notag\\
=&-(-1)^n\pi \varepsilon u(1,t)+(\pi \varepsilon+\overline\mu\varepsilon \pi ) u(0,t)\notag\\&-\frac{1}{b}\overline\mu\varepsilon \pi (\lambda-a-\pi^2 n^2\varepsilon) \zeta(t)=0\label{eq:pr}
\end{align}
for $t\in(\mu_{i+1},t_{i+1})$, $n\in\mathbb N$, where \eqref{eq:c} is applied in going from the second equation to the third one in \eqref{eq:pr}.
Considering any two odd (or even) positive integers $n_1\neq n_2$, we obtain from \eqref{eq:pr} that
$(n_1^2-n_2^2) \zeta(t)=0$ for $t\in(\mu_{i+1},t_{i+1})$.
Considering the fact that $\zeta\in C^0([t_i,t_{i+1}];\mathbb R)$ and $\zeta(t)$ is not identically zero on $t\in[t_i,t_{i+1}]$,  one obtains
$n_1^2=n_2^2$: contradiction.
Consequently,  $S_i$ is  a singleton, i.e., $S_i=\{(\lambda,a)\}$. Therefore, $\hat \lambda(t_{i+1})=\lambda, \hat a(t_{i+1})=a$.
\end{proof}
From Claims \ref{cl:lem4a}--\ref{cl:lem4d}, we obtain Lemma \ref{13lem:theta}.
\end{proof}
\begin{lema}\label{lem:keep}
If $\hat \lambda(t_{i})=\lambda$ (or $\hat a(t_{i})=a$) for certain $i\in  \mathbb Z^+$, then $\hat \lambda(t)=\lambda$ (or $\hat a(t)=a$) for all $t\in[t_i,+\infty)$.
\end{lema}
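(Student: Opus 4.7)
The plan is to obtain Lemma \ref{lem:keep} as a direct consequence of the component-wise dichotomy already established by Lemma \ref{13lem:theta}, combined with a simple induction on the triggering index.

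First I would argue the $\hat\lambda$ statement. Assume $\hat\lambda(t_i) = \lambda$ and look at the next triggering interval $[\mu_{i+1},t_{i+1}]$. Lemma \ref{13lem:theta} provides exactly two alternatives for the update of $\hat\lambda$: either $u[t]$ is not identically zero on $[\mu_{i+1},t_{i+1}]$, in which case $\hat\lambda(t_{i+1})=\lambda$, or $u[t]$ is identically zero on $[\mu_{i+1},t_{i+1}]$, in which case $\hat\lambda(t_{i+1})=\hat\lambda(t_i)=\lambda$ by the inductive hypothesis. Either way $\hat\lambda(t_{i+1}) = \lambda$. Iterating, a straightforward induction on $j\ge 0$ yields $\hat\lambda(t_{i+j})=\lambda$ for every $j\in\mathbb Z^+$.

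Next, I would upgrade this discrete statement to the continuous conclusion on $[t_i,+\infty)$. By construction \eqref{eq:adaptivelaw}, the estimator $\hat\theta$ is piecewise-constant and is only updated at the triggering times $\{t_k\}$; hence $\hat\lambda(t) = \hat\lambda(t_k)$ on each interval $[t_k,t_{k+1})$. Combined with $\hat\lambda(t_{i+j}) = \lambda$ for all $j\ge 0$ and with the absence of Zeno (Lemma \ref{lem:dwell}, which ensures $t_k\to +\infty$), this immediately gives $\hat\lambda(t)=\lambda$ for all $t\in[t_i,+\infty)$. The argument for $\hat a$ is word-for-word identical with the role of $u[t]$ replaced by $\zeta(t)$, using the parenthesized halves of Lemma \ref{13lem:theta}.

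There is essentially no obstacle beyond recognizing that Lemma \ref{13lem:theta} has been crafted precisely so that each scalar component of $\hat\theta$ is either refreshed to its true value (when its associated excitation, $u[t]$ or $\zeta(t)$, is nontrivial on $[\mu_{i+1},t_{i+1}]$) or frozen at its previous value (when that excitation vanishes identically). Correctness of a single component is therefore a monotone invariant of the update, which is the entire content of Lemma \ref{lem:keep}.
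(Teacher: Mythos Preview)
Your proposal is correct and follows essentially the same approach as the paper's own proof: both invoke the dichotomy from Lemma~\ref{13lem:theta} (each update either hits the true value or freezes the previous one), induct over triggering indices, and then appeal to Lemma~\ref{lem:dwell} (no Zeno) together with the piecewise-constant structure of $\hat\theta$ to extend the conclusion to all of $[t_i,+\infty)$. Your write-up is in fact slightly more explicit about the induction step and the role of piecewise constancy, but the argument is identical in substance.
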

\begin{proof}
According to Lemma \ref{13lem:theta}, we have that $\hat \lambda(t_{i+1})$ is equal to either $\lambda$ or $\hat \lambda(t_i)$. Therefore, if  $\hat \lambda(t_i)=\lambda$, then $\hat \lambda(t_{i+1})=\lambda$. Repeating the above process, then $\hat \lambda(t)=\lambda$ for all $t\in[t_i,\lim_{k\to \infty}(t_k))$. Recalling Lemma \ref{lem:dwell} which implies $\lim_{k\to \infty}(t_k)\to\infty$, we thus have $\hat \lambda(t)=\lambda$ for $t\in[t_i,\infty)$. The same is true of $\hat a$. The proof is complete.
\end{proof}
\begin{lema}\label{lem:K2}
If $u[0]=0,\zeta(0)\neq 0$, and the user-selected initial estimates $\hat\lambda(0),\hat a(0)$ happen to make $K_2(1;\hat\theta(0))=K_2(1;\hat\lambda(0),\hat a(0))=0$, the constant $K_2(1;\hat\theta(t_1))\neq 0$ is ensured just by changing $\hat\lambda(0)$ as another value (arbitrary) in $[\underline\lambda,\overline\lambda]$.
\end{lema}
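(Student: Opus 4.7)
The plan is to trace the closed-loop evolution over the first interval $[0,t_1]$ under the pathological hypothesis $u[0]=0$, $K_2(1;\hat\theta(0))=0$, and then exploit the fact that $K_2(1;\lambda,a)$ depends affinely on $\lambda$ through $r=q-\lambda/(2\varepsilon)$.

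First I would unpack the dynamics on $[0,t_1]$. Since $u[0]=0$, the piecewise-constant control \eqref{eq:dU} at $i=0$ collapses to $U_{d0}=K_2(1;\hat\theta(0))\zeta(0)$, which vanishes by hypothesis. Plugging $U_{d0}=0$ into \eqref{eq:(1)} gives the homogeneous boundary condition $u_x(1,t)+qu(1,t)=0$ on $[0,t_1)$; combined with $u_x(0,t)=0$ and the zero initial data, the parabolic equation \eqref{eq:ode2} admits only the trivial solution, so $u[t]\equiv 0$ on $[0,t_1]$. Consequently $u(0,t)\equiv 0$, the ODE reduces to $\dot\zeta=a\zeta$, and $\zeta(t)=\zeta(0)e^{at}$ is not identically zero on $[0,t_1]$ because $\zeta(0)\neq 0$. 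Applying the branch of Lemma \ref{13lem:theta} corresponding to ``$u$ identically zero, $\zeta$ not identically zero'' then yields $\hat a(t_1)=a$ and $\hat\lambda(t_1)=\hat\lambda(0)$, so that $K_2(1;\hat\theta(t_1))=K_2(1;\hat\lambda(0),a)$.

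Next I would exploit the structural fact that $K_2$ is affine in $\lambda$. By inspection of \eqref{eq:ga}, $\gamma(x)$ depends only on $a$, and the kernel conditions \eqref{eq:h1}--\eqref{eq:h4} contain no explicit $\lambda$, so $h(x,y)$ depends only on $a$ through $\gamma$. Substituting $r=q-\lambda/(2\varepsilon)$ into \eqref{eq:K2} and grouping terms therefore gives $K_2(1;\lambda,a)=A(a)+(q-\lambda/(2\varepsilon))B(a)$, where $A(a):=\gamma'(1)-\int_0^1 h_x(1,y)\gamma(y)\,dy$ and $B(a):=\gamma(1)-\int_0^1 h(1,y)\gamma(y)\,dy$ depend on $a$ alone. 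Thus the map $\lambda\mapsto K_2(1;\lambda,a)$ is affine and, when $B(a)\neq 0$, has a single isolated zero $\lambda^\star(a)=2\varepsilon\bigl(q+A(a)/B(a)\bigr)$.

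The conclusion then combines these facts: after the user resets $\hat\lambda(0)$ to any other value in $[\underline\lambda,\overline\lambda]$, the discussion above still produces $K_2(1;\hat\theta(t_1))=K_2(1;\text{new }\hat\lambda(0),a)$, and the latter fails to vanish as long as the new $\hat\lambda(0)$ avoids the single exceptional point $\lambda^\star(a)$; since $[\underline\lambda,\overline\lambda]\setminus\{\lambda^\star(a)\}$ still contains values arbitrarily close to the original $\hat\lambda(0)$, the reset can always be carried out. The main obstacle will be the degenerate edge case $B(a)=0$, in which $K_2(1;\cdot,a)$ is constant in $\lambda$; handling this requires showing from the explicit forms \eqref{eq:ga} and \eqref{eq:h1}--\eqref{eq:h4}, together with the design-parameter choice \eqref{eq:kapa}, that $B(a)\neq 0$ under the standing assumptions, so that the single-zero picture is never destroyed.
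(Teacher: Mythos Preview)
Your argument follows the same outline as the paper's: trace the dynamics on $[0,t_1]$ to show $u\equiv0$, invoke Lemma~\ref{13lem:theta} to get $\hat a(t_1)=a$ and $\hat\lambda(t_1)=\hat\lambda(0)$, and then exploit the affine dependence of $K_2(1;\cdot,a)$ on $\lambda$ to conclude that at most one value of $\hat\lambda(0)$ is bad. The paper's presentation differs only cosmetically: instead of decomposing $K_2=A(a)+rB(a)$, it compares $K_2(1;\hat\lambda(0),a)$ directly with the true-parameter value $K_2(1;\theta)$, writing $K_2(1;\hat\lambda(0),a)=K_2(1;\theta)+\tfrac{\lambda-\hat\lambda(0)}{2\varepsilon}\gamma(1)$ and thereby identifying the unique bad choice as $\hat\lambda(0)=\lambda+2\varepsilon K_2(1;\theta)/\gamma(1)$. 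Two points of comparison are worth noting. First, the paper's slope term $\gamma(1)$ appears to overlook the $r$-dependence inside the integral in \eqref{eq:K2}; your $B(a)=\gamma(1)-\int_0^1 h(1,y)\gamma(y)\,dy$ is the correct coefficient of $-\lambda/(2\varepsilon)$. Second, the paper does not address the degenerate case you flag (nonvanishing of the slope); it simply divides by $\gamma(1)$ without comment, so your caution about $B(a)=0$ is well-placed but goes beyond what the paper itself establishes.
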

\begin{proof}
Because the kernels $h$ and $\gamma$ in $K_2$ \eqref{eq:K2} only include the unknown parameter: $a$, considering $\hat a(t_1)=a$ ensured by $\zeta(0)\neq 0$ with Lemma \ref{13lem:theta}, and $\hat\lambda(t_1)=\hat\lambda(0)$ due to the fact that $u[t]$ is identically zero on $t\in[0,t_1]$ (which is the result of $K_2(1;\hat\theta(0))=0$ with \eqref{eq:ode1}--\eqref{eq:ode4}, \eqref{eq:dU}, \eqref{eq:(1)} and $u[0]=0$), we have that
\begin{align}
K_2(1;\hat\theta(t_1))&=K_2(1;\hat\lambda(t_1),\hat a(t_1))=K_2(1;\hat\lambda(0),a)\notag\\&=K_2(1;\theta)+\frac{\lambda-\hat\lambda(0)}{2\varepsilon}\gamma(1).
\end{align}
If $K_2(1;\hat\theta(t_1))=0$, it implies that
\begin{align}
\hat\lambda(0)=\lambda+\frac{2\varepsilon K_2(1;\theta)}{\gamma(1)}.
\end{align}
Therefore, once we pick another $\hat\lambda(0)$, then $K_2(1;\hat\theta(t_1))\neq 0$ is ensured in the situation mentioned in this lemma.

The proof of Lemma \ref{lem:K2} is complete.
\end{proof}
\begin{rem}\label{eq:re1}
\emph{If $u[0]=0,\zeta(0)\neq 0$, and $K_2(1;\hat\theta(0))=0$ is found under the user-selected initial estimates $\hat\theta(0)=[\hat\lambda(0),\hat a(0)]^T$, then $\hat\lambda(0)$ should be changed as another value (arbitrary) in $[\underline\lambda,\overline\lambda]$.
}
\end{rem}
According to Lemma \ref{lem:K2}, the purpose of Remark \ref{eq:re1} is to avoid the appearance of an extreme case that $u[0]=0$, $\zeta(0)\neq 0$, $K_2(1;\hat\theta(0))=0$, $K_2(1;\hat\theta(t_1))=0$, which implies $K_2(1;\hat\theta(t))=K_2(1;\hat\theta(t_1))=K_2(1;\hat\lambda(0),a)=0$ for $t\ge t_1$, and leads to that the regulation on the ODE dynamics \eqref{eq:ode1} is lost, i.e., $u[t]\equiv0$ for all time while $\zeta(t)$ dynamics may be unstable, according to \eqref{eq:ode1}--\eqref{eq:ode4}, \eqref{eq:dU}, \eqref{eq:(1)}.

Let $\hat\theta(0)$ belong to $\Theta_1$ which is equal to $\Theta$ under Remark \ref{eq:re1}, we obtain the following parameter convergence property.
\begin{lema}\label{lem:convergence}
For $\hat\theta(0)\in \Theta_1$, and all $u[0]\in L^2(0,1)$, $\zeta(0)\in \mathbb R$ except for the case that both $u[0]$ and $\zeta(0)$ are zero, we have
\begin{align}
\hat\lambda (t)=\lambda,~~~~\hat a (t)=a \label{eq:convergence}
\end{align}
for all $t\ge t_2$.
\end{lema}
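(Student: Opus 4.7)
The plan is a case analysis on the initial data $(u[0],\zeta(0))$, leveraging Lemma~\ref{13lem:theta}, which pins $\hat\lambda$ (respectively $\hat a$) at $\lambda$ (resp.~$a$) as soon as $u[t]$ (resp.~$\zeta(t)$) is not identically zero on the current identification window $[\mu_{i+1},t_{i+1}]$, and Lemma~\ref{lem:keep}, which freezes a correctly-identified estimate forever. In each admissible initial configuration it therefore suffices to exhibit, for each parameter, a window $[\mu_k,t_k]$ with $k\in\{1,2\}$ on which the corresponding trigger signal is not identically zero.

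If $u[0]\not\equiv 0$ and $\zeta(0)\neq 0$, continuity of $u\in C^0([0,t_1];L^2(0,1))$ and $\zeta\in C^0([0,t_1];\mathbb R)$ forbids either signal from being identically zero on $[\mu_1,t_1]=[0,t_1]$, so Lemma~\ref{13lem:theta} yields $\hat\lambda(t_1)=\lambda$ and $\hat a(t_1)=a$ and Lemma~\ref{lem:keep} extends both to all $t\ge t_1$, in particular to $t\ge t_2$. If $u[0]\equiv 0$ but $\zeta(0)\neq 0$, continuity supplies $\hat a(t_1)=a$; moreover $\hat\theta(0)\in\Theta_1$ together with Lemma~\ref{lem:K2} and Remark~\ref{eq:re1} forces $K_2(1;\hat\theta(0))\neq 0$, so $U_d(0)=K_2(1;\hat\theta(0))\zeta(0)\neq 0$, and if $u[t]\equiv 0$ on $[0,t_1]$ the boundary condition \eqref{eq:(1)} would collapse to $0=U_d(0)$, a contradiction, whence $\hat\lambda(t_1)=\lambda$ as well.

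The delicate case is $u[0]\not\equiv 0$ with $\zeta(0)=0$. Continuity of $u$ gives $\hat\lambda(t_1)=\lambda$. If $\zeta\not\equiv 0$ on $[0,t_1]$ then $\hat a(t_1)=a$ directly; if $\zeta\equiv 0$ on $[0,t_1]$ but $\zeta\not\equiv 0$ on $[t_1,t_2]\subseteq[\mu_2,t_2]$, Lemma~\ref{13lem:theta} on the second window gives $\hat a(t_2)=a$. The only remaining possibility is $\zeta\equiv 0$ on $[0,t_2]$; then the ODE \eqref{eq:ode1} forces $u(0,t)\equiv 0$ on $[0,t_2]$, and combining this with $u_x(0,t)=0$, differentiating the PDE \eqref{eq:ode2} in $x$ and exploiting the identity $\partial_x^k u(0,t)=\varepsilon^{-1}(\partial_t\partial_x^{k-2}u(0,t)-\lambda\partial_x^{k-2}u(0,t))$ in an induction on $k$ yields $\partial_x^k u(0,t)=0$ for every $k\ge 0$ and $t\in(0,t_2]$. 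The analyticity in $x$ of parabolic solutions for $t>0$ then forces $u[t]\equiv 0$ on $(0,t_2]$, contradicting $u[0]\not\equiv 0$ via $L^2$-continuity in $t$. Hence $\hat a(t_2)=a$, and Lemma~\ref{lem:keep} completes the proof.

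The main obstacle is precisely this last unique-continuation step: the algebraic identification machinery of Lemma~\ref{13lem:theta} and the $K_2$-nondegeneracy enforced by $\Theta_1$ handle everything else, but ruling out a nontrivial parabolic profile that stealthily maintains $u(0,t)\equiv 0$ — and thereby $\zeta\equiv 0$ — requires a genuinely PDE-theoretic argument (analyticity in $x$ of the parabolic semigroup, or equivalently a Holmgren-type observability statement through the left boundary).
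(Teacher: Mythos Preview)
Your case split and the use of Lemmas~\ref{13lem:theta} and~\ref{lem:keep} mirror the paper's proof exactly, and your Case~1 is identical to the paper's. There is, however, a genuine gap in your Case~2. You assert that ``$\hat\theta(0)\in\Theta_1$ together with Lemma~\ref{lem:K2} and Remark~\ref{eq:re1} forces $K_2(1;\hat\theta(0))\neq 0$,'' but that is a misreading: Lemma~\ref{lem:K2} only guarantees $K_2(1;\hat\theta(t_1))\neq 0$ after the change of $\hat\lambda(0)$, not $K_2(1;\hat\theta(0))\neq 0$, and the paper's definition $\Theta_1=\Theta$ (subject to Remark~\ref{eq:re1}) does not exclude this. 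The paper therefore treats the subcase $K_2(1;\hat\theta(0))=0$ explicitly: then $u[t]\equiv 0$ on $[0,t_1]$, $\zeta(t)=\zeta(0)e^{at}$ stays nonzero, Lemma~\ref{lem:K2} gives $K_2(1;\hat\theta(t_1))\neq 0$, whence $U_d(t_1)\neq 0$ forces $u[t]\not\equiv 0$ on $[t_1,t_2]$, and convergence of $\hat\lambda$ occurs only at $t_2$. This subcase is precisely why the lemma is stated for $t\ge t_2$ rather than $t\ge t_1$; your argument as written would (incorrectly) yield the stronger conclusion $t\ge t_1$.

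Your Case~3 reaches the right conclusion but by a different unique-continuation route than the paper. The paper argues directly on $[0,t_1]$: assuming $\zeta\equiv 0$ there gives $u(0,t)=0$, and together with $u_x(0,t)=0$ it invokes a separation-of-variables (eigenfunction-expansion) argument to force $u[t]\equiv 0$ on $[0,t_1]$, contradicting $u[0]\not\equiv 0$. Your analyticity-in-$x$ argument is a legitimate alternative and arguably more self-contained, though it leans on interior spatial analyticity of parabolic solutions, which is true but heavier machinery than the eigenfunction route; also note the paper only needs the contradiction on $[0,t_1]$, so your intermediate subcase ``$\zeta\equiv 0$ on $[0,t_1]$ but not on $[t_1,t_2]$'' never actually arises.
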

\begin{proof}
Case 1: $u(x,0)$ is not identically zero for $x\in[0,1]$, and $\zeta(0)$ is not zero. We know that $u[t],\zeta(t)$ are not identically zero on $t\in[0,t_1]$. Recalling Lemmas \ref{13lem:theta}, \ref{lem:keep}, we obtain   \eqref{eq:convergence}.

Case 2: $u[0]=0$ and $\zeta(0)\neq 0$.

We know that $\zeta(t)$ is not identically zero on $t\in[0,t_1]$. If $K_2(1;\hat\lambda(0),\hat a(0))\neq0$, we have that $u[t]$ is not identically zero on $t\in[0,t_1]$ according to \eqref{eq:ode1}--\eqref{eq:ode4}, \eqref{eq:dU}, \eqref{eq:(1)}. Then it is straightforward to obtain \eqref{eq:convergence} with recalling Lemmas \ref{13lem:theta}, \ref{lem:keep}.

If $K_2(1;\hat\lambda(0),\hat a(0))=0$, then $u[t]=0$ on $t\in[0,t_1]$ according to \eqref{eq:ode1}--\eqref{eq:ode4}, \eqref{eq:dU}, \eqref{eq:(1)} and $u[0]=0$. It follows that $\zeta(t)=\zeta(0)e^{a t}$ in \eqref{eq:ode1}, which is not zero on $t\in[0,t_1]$ under $\zeta(0)\neq 0$. Recalling Lemma \ref{lem:K2} and Remark \ref{eq:re1}, we have $K_2(1;\hat\theta(t_1))\neq 0$, which results in that $u[t]$ is not identically zero on $t\in[t_1,t_2]$ considering \eqref{eq:ode1}--\eqref{eq:ode4}, \eqref{eq:dU}, \eqref{eq:(1)}, and the fact that $\zeta(t_1)=\zeta(0)e^{a t_1}$ is not zero. Therefore, we obtain \eqref{eq:convergence} from Lemmas \ref{13lem:theta}, \ref{lem:keep}.

Case 3: $u(x,0)$ is not identically zero, and $\zeta(0)= 0$.

It is obvious that $u[t]$ is not identically zero on $t\in[0,t_1]$. Supposing that $\zeta[t]$ is identically zero on $t\in[0,t_1]$, it follows from \eqref{eq:ode1} that $u(0,t)=0$  on $t\in[0,t_1]$. Applying the method of separation of variables shown in (3.4)--(3.10) in \cite{krstic2008}, it implies from \eqref{eq:ode2}, \eqref{eq:ode4} and $u(0,t)=0$ that $u[t]$ is identically zero on $t\in[0,t_1]$: contradiction. Therefore, $\zeta[t]$ is also not identically zero on $t\in[0,t_1]$. We thus obtain \eqref{eq:convergence} from Lemmas \ref{13lem:theta}, \ref{lem:keep}.

The proof of this lemma is complete.
\end{proof}
{We are now ready to show the main result of this paper. }
\begin{thme}\label{13th:part1}
For all initial data $u[0]\in L^2(0,1)$, $\zeta(0)\in \mathbb R$, $m(0)\in \mathbb R_-$, and $\hat\theta(0)\in \Theta_1$, the closed-loop system, i.e., \eqref{eq:ode1}--\eqref{eq:ode5} under the controller \eqref{eq:dU}, with the event-triggering mechanism \eqref{eq:tk1}, \eqref{eq:dm}, and the least-squares identifier defined by \eqref{eq:adaptivelaw}, has the following properties:

1) Except for the case that both $u[0]$ and $\zeta(0)$ are zero, there exist positive constants $M,\sigma$ (independent of initial conditions) such that
\begin{align}
\Omega(t)\le M\Omega(0)e^{-\sigma t},~~t\in [0,\infty)\label{13eq:th1norm1}
\end{align}
where
\begin{align}
\Omega(t)=\|u[t]\|^2+ \zeta(t)^2+|m(t)|+\left|\tilde\theta(t)\right|.
\end{align}
The signal bars $|\cdot|$ for $\tilde\theta(t)=\theta-\hat\theta(t)$ denotes the Euclidean norm.

2) If both $u[0]$ and $\zeta(0)$ are zero, all signals are bounded in the sense of
\begin{align}
\Omega(t)\le |m(0)|+\left|\theta-\hat\theta(0)\right|,~~t\in [0,\infty).\label{eq:bound}
\end{align}
\end{thme}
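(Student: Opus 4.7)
The plan is to split the analysis at $t_2$, exploiting the finite-time identification property from Lemma \ref{lem:convergence}. On $[t_2,\infty)$, since $\hat\theta(t)=\theta$, the perturbation $p(t)\equiv 0$, so the closed-loop system reduces to the nominal target system \eqref{eq:targ1}--\eqref{eq:targ4} perturbed only by the sampling error $d(t)$ entering through the controlled boundary (from \eqref{eq:(1)} we get $u_x(1,t)+qu(1,t)=U(t)-d(t)$ after $t_2$, which via the three invertible backstepping transformations \eqref{eq:contran1b}, \eqref{eq:contran2b}, \eqref{eq:thirdtran} translates into the single perturbed boundary condition $\beta_x(1,t)+r\beta(1,t)=-d(t)$, with the other three target equations unchanged).

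For the perturbed target system on $[t_2,\infty)$, I would use a Lyapunov functional of the form
\begin{align*}
V(t) = A\|\beta[t]\|^2 + B\,\zeta(t)^2 - C\,m(t),
\end{align*}
for positive constants $A,B,C$ to be chosen. Differentiating $A\|\beta\|^2$ and integrating by parts using \eqref{eq:targ2}, \eqref{eq:targ3} and the perturbed \eqref{eq:targ4} produces a boundary term of the form $-2A\varepsilon r\beta(1,t)^2-2A\varepsilon\beta(1,t)d(t)$ plus $-2A\varepsilon\|\beta_x\|^2$; Young's inequality absorbs the cross term into $\varepsilon A r\beta(1,t)^2+\frac{A\varepsilon}{r}d(t)^2$, which is admissible since $r>1/4$ by \eqref{eq:r}. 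Differentiating $B\zeta^2$ with \eqref{eq:targ1} gives $-2Ba_m\zeta^2+2Bb\zeta\beta(0,t)$, again handled by Young's inequality and a Poincaré/trace inequality on $\beta$. The $-Cm(t)$ term, via \eqref{eq:dm} and the triggering inequality $d(t)^2\le-\xi m(t)$, supplies $-C\eta m + C\lambda_d d^2 + C\kappa_1 u(1,t)^2+\cdots+C\kappa_4\zeta^2$ with positive sign on $-m$, from which choosing $C$ large enough absorbs the $d(t)^2$ contribution from the boundary estimate, and choosing $\kappa_1,\ldots,\kappa_4$ satisfying the inequalities already required in Lemma \ref{lem:dwell} absorbs the remaining $u(0,t)^2$, $u(1,t)^2$, $\|u\|^2$, $\zeta^2$ terms arising from transforming between $\beta$ and $u$. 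This yields $\dot V \le -\sigma' V$ on $[t_2,\infty)$, hence $V(t)\le V(t_2)e^{-\sigma'(t-t_2)}$.

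To conclude part 1, I would use the bounded invertibility of the three transformations \eqref{eq:contran1b}, \eqref{eq:contran1bI}, \eqref{eq:contran2b}, \eqref{eq:thirdtran}, \eqref{eq:thirdtranI} to show norm equivalence between $\|\beta[t]\|^2+\zeta(t)^2$ and $\|u[t]\|^2+\zeta(t)^2$; combined with $|m(t)|=-m(t)$ and the fact that $|\tilde\theta(t)|=0$ for $t\ge t_2$ by Lemma \ref{lem:keep}, this gives the exponential estimate on $\Omega(t)$ for $t\ge t_2$. On the transient $[0,t_2]$, Lemma \ref{lem:dwell} gives $t_2\le 2T$, and Proposition \ref{13Pro:1} together with well-posedness and continuity of the controller kernels in the (bounded) estimates yields a bound $\Omega(t)\le M_0\Omega(0)$ on $[0,t_2]$ for some $M_0$ independent of the initial data. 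Splicing the two estimates gives \eqref{13eq:th1norm1} with an enlarged $M$ and $\sigma=\sigma'$.

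Part 2 is essentially immediate: if $u[0]=0$ and $\zeta(0)=0$, then \eqref{eq:ode1}--\eqref{eq:ode4} with the piecewise-constant input \eqref{eq:dU} admits the identically zero solution (and uniqueness from Corollary \ref{col} forces this to be the solution), so $u[t]\equiv 0$, $\zeta(t)\equiv 0$; consequently $d(t)\equiv 0$, $\hat\theta(t)\equiv\hat\theta(0)$, and $\dot m=-\eta m$, whence $|m(t)|\le|m(0)|$ and the bound \eqref{eq:bound} follows. The main obstacle I expect is book-keeping in the Lyapunov step: selecting the constants $A,B,C,\kappa_1,\ldots,\kappa_4$ in a way that is simultaneously compatible with Lemma \ref{lem:dwell} (for no Zeno behavior) and with the negative definiteness of $\dot V$, which requires the Poincaré constants, the coefficient $r>1/4$ from \eqref{eq:r}, and the $a_m>0$ margin from \eqref{eq:am} to appear in the right inequalities.
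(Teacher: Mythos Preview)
Your overall architecture matches the paper's: split at $t_2$, use a Lyapunov functional combining $\|\beta\|^2$, $\zeta^2$ and $-m$, establish exponential decay after $t_2$ and bounded growth before, then splice. However, the Lyapunov bookkeeping contains a genuine gap.

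The role of the $\kappa_i$ terms is backwards. Differentiating $-Cm$ along \eqref{eq:dm} gives $-C\dot m = C\eta m - C\lambda_d d^2 + C\kappa_1 u(1,t)^2 + \cdots + C\kappa_4\zeta(t)^2$ (your signs on the first two terms are flipped). The $C\kappa_i$ contributions enter $\dot V$ with \emph{positive} sign; they are destabilizing and do not ``absorb'' anything. They must themselves be dominated by the negative definite part produced by $A\|\beta\|^2$ and $B\zeta^2$, after converting $u(1,t)^2$, $u(0,t)^2$, $\|u\|^2$ back to $\beta,\zeta$ via the inverse transformations (the paper records these as the bounds $u(0,t)^2\le m_1(\beta(0,t)^2+\zeta^2)$, etc.). This forces $A,B$ large relative to $C\kappa_i$, not the other way around. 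Consequently your plan ``choose $C$ large to absorb $d^2$'' creates a circularity you do not close: the boundary Young step leaves $+r_1 A\varepsilon\, d(t)^2$, requiring $C\lambda_d\ge r_1 A\varepsilon$; making $C$ large inflates $C\kappa_i$, forcing $A,B$ larger, forcing $C$ larger again. The paper breaks this loop by fixing the coefficient on $-m$ to $1$ and treating $\lambda_d$ (a design parameter of the ETM) as the \emph{last} free knob: first pick $\kappa_i$ for Lemma~\ref{lem:dwell}, then choose $r_a,r_c$ large enough to dominate the now-fixed $\kappa_i$ terms, and only then set $\lambda_d\ge r_1 r_a\varepsilon$ to cancel $d^2$. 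Without invoking $\lambda_d$ as a design parameter, the constants cannot be closed.

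Two smaller corrections. The transient bound on $[0,t_2]$ does not follow from Proposition~\ref{13Pro:1} and ``continuity'' alone; the paper obtains it by reusing the \emph{same} $V$, bounding the parameter-error term as $p(t)^2\le \Upsilon_p(\|\beta[t]\|^2+\zeta(t)^2)$ (since $\hat\theta\in\Theta$), and deriving $\dot V\le \bar\sigma V$ with a possibly positive $\bar\sigma$, hence $V(t)\le V(0)e^{\bar\sigma t}$. Also, $t_2\le 2T$ follows from the maximum dwell-time clause in \eqref{eq:tk1}, not from Lemma~\ref{lem:dwell}, which only gives a \emph{lower} bound on the dwell time.
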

\begin{proof}
1) Now we prove the first of the two portions of the theorem.

Define a Lyapunov function as
\begin{align}
V(t) = \frac{1}{2}r_a\int_0^1 \beta(x,t)^2 dx+\frac{1}{2}r_c\zeta(t)^2-m(t)\label{eq:Va}
\end{align}
where $m(t)$ is defined in \eqref{eq:dm}.

Defining
\begin{align}
\bar\Omega(t)=\|\beta[t]\|^2+ \zeta(t)^2+|m(t)|,\label{eq:bom}
\end{align}
we have
\begin{align}
\xi_3\bar\Omega(t)\le V(t)\le \xi_4\bar\Omega(t)\label{eq:barOmega}
\end{align}
where
\begin{align}
\xi_3=&\min\left\{\frac{1}{2}r_a,\frac{1}{2}r_c,1\right\}>0,\notag\\
\xi_4=&\max\left\{\frac{1}{2}r_a,\frac{1}{2}r_c,1\right\}>0.
\end{align}
According to \eqref{eq:newplant} and the nominal control design in Section \ref{13sec:nominalcontrol}, in the triggered control system, the right boundary condition of the target system \eqref{eq:targ1}--\eqref{eq:targ4} becomes
\begin{align}
\beta_x(1,t)+r\beta(1,t)= -p(t)-d(t).\label{eq:1targ5}
\end{align}
For $t\in(t_i,t_{i+1})$, $i\in \mathbb Z^+$, taking the derivative of \eqref{eq:Va} along \eqref{eq:targ1}--\eqref{eq:targ3}, \eqref{eq:1targ5},  recalling \eqref{eq:dm},
we have that
\begin{align}
\dot V(t) =&r_a\int_0^1 \beta(x,t)\beta_t(x,t) dx+r_c\zeta(t)\dot\zeta(t)-\dot m(t)\notag\\
&+r_a\int_0^1 \beta(x,t)\varepsilon \beta_{xx}(x,t) dx+r_c\zeta(t)\dot\zeta(t)-\dot m(t)\notag\\
 =& r_a\varepsilon \beta(1,t) \beta_{x}(1,t)-r_a\varepsilon \beta(0,t) \beta_{x}(0,t)\notag\\&-r_a\varepsilon\int_0^1 \beta_{x}(x,t)^2dx
 -r_ca_{\rm m}\zeta(t)^2\notag\\&+r_c\zeta(t)b\beta(0,t)+ \eta m(t)-\lambda_d d(t)^2  \notag\\
&+{\kappa _1}{\beta}{(1,t)^2} +{\kappa _2}{\beta(0,t)^2}+{\kappa _3}\|u(\cdot,t)\|^2+{\kappa _4}\zeta(t)^2\notag\\
 =& r_a\varepsilon \beta(1,t) (-r\beta(1,t)-p(t)-d(t))\notag\\
 &-r_a\varepsilon\int_0^1 \beta_{x}(x,t)^2dx-a_{\rm m}r_c\zeta(t)^2+r_c\zeta(t)b\beta(0,t)\notag\\
 &+ \eta m(t)-\lambda_d d(t)^2+ {\kappa _1}{u}{(1,t)^2} + {\kappa _2}{u(0,t)^2}\notag\\&+{\kappa _3}\|u(\cdot,t)\|^2+ {\kappa _4}\zeta(t)^2.\label{eq:Veq}
 \end{align}
 Recalling \eqref{eq:contran1bI}, \eqref{eq:contran2b}, \eqref{eq:thirdtranI}, we have
 \begin{align}
 &u (x,t) \notag\\
 = &\beta(x,t)-\int_0^x h^I(x,y)\beta(y,t)dy + \gamma(x)\zeta(t)\notag\\
  &- \int_0^x {\Phi}(x,y)\left(\beta(y,t)-\int_0^y h^I(y,z)\beta(z,t)dz + \gamma(y)\zeta(t)\right)dy\notag\\
  =&\beta(x,t)+ \int_0^x P(x,y) \beta(y,t)dy+ \Gamma(x)\zeta(t)\label{eq:inv}
 \end{align}
 where
 \begin{align}
 P(x,y)&=\int_y^x {\Phi}(x,z) h(z,y)dz-h(x,y)-{\Phi}(x,y),\\
 \Gamma(x)&=\gamma(x)- \int_0^x {\Phi}(x,y)\gamma(y)dy.
 \end{align}
Applying  the Cauchy-Schwarz inequality, we obtain
\begin{align}
u (0,t)^2&\le m_1(\beta(0,t)^2+\zeta(t)^2),\label{eq:m1}\\
u (1,t)^2&\le m_2(\beta(1,t)^2+\zeta(t)^2+\|\beta[t]\|^2),\label{eq:m2}\\
\|u[t]\|^2&\le m_3 (\zeta(t)^2+\|\beta[t]\|^2),\label{eq:m3}
\end{align}
where
\begin{align}
m_1=&2\max\{1,\max_{\vartheta\in\Theta}\{\Gamma(0;\vartheta)^2\}>0,\notag\\
m_2=&3\max_{\vartheta\in\Theta}\bigg\{1,\int_0^1P(1,y;\vartheta)^2dy,\Gamma(1;\vartheta)^2\bigg\}>0,\notag\\
m_3=&2\max_{\vartheta\in\Theta}\bigg\{\left(1+\left(\int_0^1\int_0^x P(x,y;\vartheta)^2dydx\right)^{\frac{1}{2}}\right)^2,\notag\\
&\int_0^1 \Gamma(x;\vartheta)^2dx\bigg\}>0.
\end{align}
From Poincare inequality, we have that
\begin{align}
-\|\beta_x[t]\|^2\le \frac{1}{2} \beta(1,t)^2-\frac{1}{4}\|\beta[t]\|^2.\label{eq:ine1}
\end{align}
From Agmon's and Young's inequalities, we have that
\begin{align}
\beta(0,t)^2\le \beta(1,t)^2+\|\beta[t]\|^2+\|\beta_x[t]\|^2.\label{eq:ine2}
\end{align}
Applying Young's inequality and the Cauchy-Schwarz inequality into \eqref{eq:Veq}, with using \eqref{eq:m1}--\eqref{eq:m3}, \eqref{eq:ine1}, \eqref{eq:ine2}, we have that
 \begin{align}
\dot V(t) \le& -rr_a\varepsilon \beta(1,t)^2-r_a\varepsilon \beta(1,t)p(t)-r_a\varepsilon \beta(1,t)d(t)\notag\\
&-\frac{1}{8}r_a\varepsilon\int_0^1 \beta(x,t)^2dx+\frac{1}{4}r_a\varepsilon\beta(1,t)^2\notag\\&-\frac{1}{2}r_a\varepsilon\int_0^1 \beta_{x}(x,t)^2dx-\frac{3}{4}a_{\rm m}r_c\zeta(t)^2+\frac{r_c}{a_{\rm m}}b^2\beta(0,t)^2\notag\\
 &+ \eta m(t)-\lambda_d d(t)^2+{\kappa _1 m_2}{\beta}{(1,t)^2} +{\kappa _2}m_1{\beta(0,t)^2}\notag\\
 &+({\kappa _3}m_3+{\kappa _1}m_2)\|\beta(\cdot,t)\|^2\notag\\&+({\kappa _4}+{\kappa _2}m_1+{\kappa _1 m_2}+{\kappa _3}m_3)\zeta(t)^2\notag\\
\le & -\bigg[\left(r-\frac{1}{4}\right)r_a\varepsilon-\frac{r_a\varepsilon}{4r_1}-\frac{r_a\varepsilon}{4r_2}-\frac{r_c}{a_{\rm m}}b^2\notag\\&-{\kappa _1}m_2-\kappa_2 m_1 \bigg]\beta(1,t)^2\notag\\
 &+\eta m(t)-(\lambda_d-r_1r_a\varepsilon) d(t)^2+r_2r_a\varepsilon p(t)^2\notag\\
 &-\left(\frac{3r_c}{4}a_{\rm m}-{\kappa _1 m_2}-{\kappa _2}m_1-{\kappa _3}m_3-{\kappa _4}\right)\zeta(t)^2\notag\\&-\left(\frac{1}{2}r_a\varepsilon-\frac{r_c}{a_{\rm m}}b^2-{\kappa _2}m_1\right)\|\beta_x[t]\|^2\notag\\&-\left(\frac{1}{8}r_a\varepsilon-\frac{r_c}{a_{\rm m}}b^2-{\kappa _1}m_2-{\kappa _2}m_1-{\kappa _3}m_3\right)\|\beta[t]\|^2\label{eq:dV1}
 \end{align}
for $t\in(t_i,t_{i+1})$.
Choosing
\begin{align}
\min\{r_1,r_2\}\ge& \frac{1}{q-\frac{\overline\lambda}{2\varepsilon}-\frac{1}{4}}\ge \frac{1}{r-\frac{1}{4}},\\
r_c>&\frac{8({\kappa _1 m_2}+{\kappa _2}m_1+{\kappa _3}m_3+{\kappa _4})}{3 a_{\rm m}},\\
r_a\ge&\max\bigg\{\frac{2(\frac{r_c}{a_{m}}b^2+{\kappa _1}m_2+{\kappa _2}m_1)}{{(q-\frac{\overline\lambda}{2\varepsilon}-\frac{1}{4})}\varepsilon},\notag\\& \frac{2}{\varepsilon}\left(\frac{r_c}{a_{\rm m}}b^2+{\kappa _2}m_1\right),\notag\\& \frac{16(\frac{r_c}{a_{\rm m}}b^2+{\kappa _1}m_2+{\kappa _2}m_1+{\kappa _3}m_3)}{\varepsilon}\bigg\},\\
\lambda_d\ge& r_1r_a\varepsilon,
\end{align}
where $r=q-\frac{\lambda}{2\varepsilon}$ in \eqref{eq:r} and Assumption \ref{eq:As1} which  ensure $r\ge q-\frac{\overline\lambda}{2\varepsilon}>\frac{1}{4}$ are recalled,
we obtain
\begin{align}
\dot V&\le -\frac{1}{16}r_a\varepsilon \|\beta[t]\|^2-\frac{3r_c}{8}a_{\rm m}\zeta(t)^2\notag\\&\quad+\eta m(t)+r_2r_a\varepsilon p(t)^2.\label{eq:dVa3}
\end{align}
That is,
\begin{align}
\dot V\le-\sigma V(t)+r_2r_a\varepsilon p(t)^2\label{eq:dVa1}
\end{align}
for $t\in(t_i,t_{i+1})$, $i\in \mathbb Z^+$,
where
\begin{align}
\sigma=\min\left\{\frac{1}{8}\varepsilon,\frac{3}{4}a_{\rm m},\eta\right\}.
\end{align}
\begin{clam}\label{cl1}
After $t=t_2$, $p(t)$ defined in \eqref{eq:p} is identically zero, i.e.,
\begin{align}
p(t)\equiv 0,~~t\in[t_{2},\infty).\label{eq:y0}
\end{align}
\end{clam}
\begin{proof}
1) If $u[0]$ and $\zeta(0)$ are zero, it follows that $u[t]$ and $\zeta(t)$ are identically zero for $t\in[0,\infty)$ considering \eqref{eq:ode1}--\eqref{eq:ode4}, \eqref{eq:dU}, \eqref{eq:(1)}. Thus \eqref{eq:y0} holds.

2) Others: Recalling \eqref{eq:p} and Lemma \ref{lem:convergence}, we obtain \eqref{eq:y0}.

The proof of Claim \ref{cl1} is complete.
\end{proof}
Multiplying both sides of \eqref{eq:dVa1} by $e^{\sigma t}$, integrating both sides of \eqref{eq:dVa1} from $t_i$ to $t_{i+1}$, $i\ge 2$, considering Claim \ref{cl1}, we obtain that
 \begin{align}
V(t)\le& V(t_{i})e^{-\sigma(t-t_{i})},~~t\in(t_i,t_{i+1}),~~i\ge 2.\label{eq:dVa}
\end{align}
Recalling Corollary \ref{col}, we know that $V(t)$ defined in \eqref{eq:Va} is continuous. We then have $V(t_{i+1}^-)=V(t_{i+1})$ and $V(t_{i}^+)=V(t_{i})$, and thus we can replace $(t_i,t_{i+1})$ by $[t_i,t_{i+1}]$ in \eqref{eq:dVa}, yielding
 \begin{align}
V(t_{i+1})\le& V(t_{i})e^{-\sigma(t_{i+1}-t_{i})},\label{eq:dVa0}
\end{align}
for $i\ge 2$.

Hence, applying \eqref{eq:dVa0} repeatedly, we obtain from \eqref{eq:dVa} that
\begin{align}
V(t)\le& V(t_{2})\prod_{c=2}^{i-1}e^{-\sigma(t_{c+1}-t_{c})}e^{-\sigma(t-t_{i})}\notag\\
=&V(t_{2})e^{-\sigma(t-t_{2})}\label{eq:dVa2}
\end{align}
for any  $t\in[t_i, t_{i+1}], i\ge 3$. Together with \eqref{eq:dVa} holding for $t\in [t_{2},t_{3}]$, we  obtain
\begin{align}
V(t)\le V(t_{2})e^{-\sigma(t-t_{2})}\label{eq:Vt}
\end{align}
for $t\ge t_2$.

In the following claim, we analyze the responses on $t\in[0,t_2]$.
\begin{clam}\label{cl3}
For the finite time $t_2$ in Claim \ref{cl1}, the following estimate holds,
\begin{align}
V(t)\le V(0)e^{\bar\sigma t},~~t\in[0,t_2]\label{eq:vin}
\end{align}
where
\begin{align}
\bar\sigma=&\frac{1}{\xi_3}\max\{r_2r_a\varepsilon\Upsilon_p,1\}\notag\\&-\min\left\{\frac{1}{8}\varepsilon,\frac{3}{4}a_{\rm m},\eta+1\right\}\label{eq:barsigma}
\end{align}
for some positive $\Upsilon_p$.
\end{clam}
\begin{proof}
Bounding $p(t)^2$ defined in \eqref{eq:p} on $t\in[0,t_{2}]$ as
\begin{align}
p(t)^2\le \bar\Upsilon_p(\|u[t]\|^2+\zeta(t)^2).
\end{align}
where
\begin{align}
\bar\Upsilon_p=&2\max\bigg\{\max_{\vartheta\in\Theta}\left\{\int_0^1 (K_1(1,y;\theta)-K_1(1,y;\vartheta))^2dy\right\},\notag\\
&\max_{\vartheta\in\Theta}\{(K_2 (1;\theta)-K_2 (1;\vartheta))^2\}\bigg\}.
\end{align}
Recalling \eqref{eq:m3}, we obtain
\begin{align}
p(t)^2\le \Upsilon_p (\|\beta[t]\|^2+\zeta(t)^2),~~t\in[0,t_{2}]\label{eq:p2}
\end{align}
where the positive constant $\Upsilon_p$ is
\begin{align}
\Upsilon_p=\max\left\{\bar\Upsilon_p m_3, \bar\Upsilon_p (m_3+1)\right\}.
\end{align}

For $0\le i< 2$, we have from \eqref{eq:dVa3} and \eqref{eq:p2} that
\begin{align}
\dot V(t)&\le -\frac{1}{16}r_a\varepsilon \|\beta[t]\|^2-\frac{3r_c}{8}a_{\rm m}\zeta(t)^2+\eta m(t)+m(t)\notag\\&\quad-m(t)+r_2r_a\varepsilon\Upsilon_p (\|\beta[t]\|^2+\zeta(t)^2)\notag\\
&\le \bar\sigma V(t),~~~t\in(t_i,t_{i+1})
\end{align}
where $\bar\sigma$ is given in \eqref{eq:barsigma}.

We then have
\begin{align}
V(t)\le V(t_i)e^{\bar\sigma(t-t_{i})},\label{eq:Vlf}
\end{align}
for $t\in(t_i,t_{i+1}), 0\le i< 2$.
Recalling again the continuity of $V(t)$,  and thus we can replace $(t_i,t_{i+1})$ by $[t_i,t_{i+1}]$ in \eqref{eq:Vlf}.

Then we have
\begin{align}
V(t_{1})\le V(0)e^{\bar\sigma t_{1}}.\label{eq:vti+1}
\end{align}
Recalling \eqref{eq:Vlf} and applying \eqref{eq:vti+1}, we have that
\begin{align}
V(t)&\le V(0)e^{{\bar\sigma}t_{1}}e^{{\bar\sigma}(t-t_{1})}\notag\\
&=V(0)e^{{\bar\sigma}t}
\end{align}
for any $t\in[t_1,t_{2}]$. It is obtained from \eqref{eq:Vlf} that $V(t)\le V(0)e^{{\bar\sigma}t}$ also holds for $t\in[0,t_1]$.  Therefore, \eqref{eq:vin} holds.

The proof of Claim \ref{cl3} is complete.
\end{proof}

We obtain from Claim \ref{cl3} that
\begin{align}
V(t_2)\le V(0)e^{{\bar\sigma}t_2}.\label{eq:V0}
\end{align}
By virtue of \eqref{eq:Vt}, \eqref{eq:vin}, \eqref{eq:V0}, we have
\begin{align}
V(t)\le& V(0)e^{(\bar\sigma+{\max\{\sigma,-\bar\sigma\}) t_{2}}}e^{-\sigma t}\notag\\
\le&  V(0)e^{2T(\bar\sigma+{\max\{\sigma,-\bar\sigma\})}}e^{-\sigma t}
\end{align}
for $t\in[0,\infty)$, where
\begin{align}
t_{2}\le 2T\label{eq:t2}
\end{align}
ensured by \eqref{eq:tk1} is recalled.

Recalling \eqref{eq:barOmega}, we have
 \begin{align}
\bar\Omega(t)\le \Upsilon_1\bar\Omega(0)e^{-\sigma t}, t\ge 0\label{13eq:Vnorm1}
\end{align}
where the positive constant $\Upsilon_1$ is
\begin{align}
\Upsilon_1=\frac{\xi_4}{\xi_3}e^{2T(\bar\sigma+{\max\{\sigma,-\bar\sigma\})}}.\label{eq:Upsilon}
\end{align}
From \eqref{eq:theta}, \eqref{eq:hattheta} and Lemma \ref{lem:convergence},  we know that
\begin{align}
\tilde\theta(t)=0,~~~\forall t\ge t_2,
\end{align}
and
\begin{align}
\left|\tilde\theta(t)\right|\le \left|\tilde\theta(0)\right|, ~~~\forall t\in[0, t_2],
\end{align}
which is ensured by Lemma \ref{13lem:theta}.

Recalling \eqref{eq:t2}, the following estimate holds
\begin{align}
\left|\tilde\theta(t)\right|\le e^{2\sigma T}  \left|\tilde\theta(0)\right| e^{-\sigma t}.
\end{align}
Therefore, together with \eqref{13eq:Vnorm1}, we have that
\begin{align}
\left(\bar\Omega(t)+\left|\tilde\theta(t)\right|\right)\le \Upsilon\left(\bar\Omega(0)+\left|\tilde\theta(0)\right|\right)e^{-\sigma t}\label{eq:barom}
\end{align}
where
\begin{align}
\Upsilon=\max\{\Upsilon_1,e^{2\sigma T}\}.
\end{align}
Applying the invertibility of the transformations \eqref{eq:contran1b}, \eqref{eq:contran2b}, \eqref{eq:thirdtran},
we thus obtain \eqref{13eq:th1norm1}.

2) Now we prove the second of the two portions of the theorem. It follows from \eqref{eq:ode1}--\eqref{eq:ode4}, \eqref{eq:dU}, \eqref{eq:(1)} and $u[0]=0$, $\zeta(0)=0$ that $u[t]\equiv 0$ and $\zeta(t)\equiv 0$ for $t\in[0,\infty)$. Recalling \eqref{eq:d}, \eqref{eq:dm}, we know $m(t)=m(0)e^{-\eta t}$, i.e., $|m(t)|\le|m(0)|$, for $t\in[0,\infty)$. Also, it is obtained (by the step
method) from Lemma \ref{13lem:theta}  that $\hat\theta(t)\equiv \hat\theta(0)$ for $t\in[0,\infty)$, i.e., $\tilde\theta(t)\equiv \theta-\hat\theta(0)$ for $t\in[0,\infty)$. Therefore, \eqref{eq:bound} is obtained.

The proof of the theorem is complete.
\end{proof}
In the proposed control system, all conditions of the design parameters are cascaded rather than coupled, and only depend on the known parameters. An order of selecting the design parameters is shown in Figure \ref{fig:0}.
\begin{figure}
\centering
\includegraphics[width=9cm]{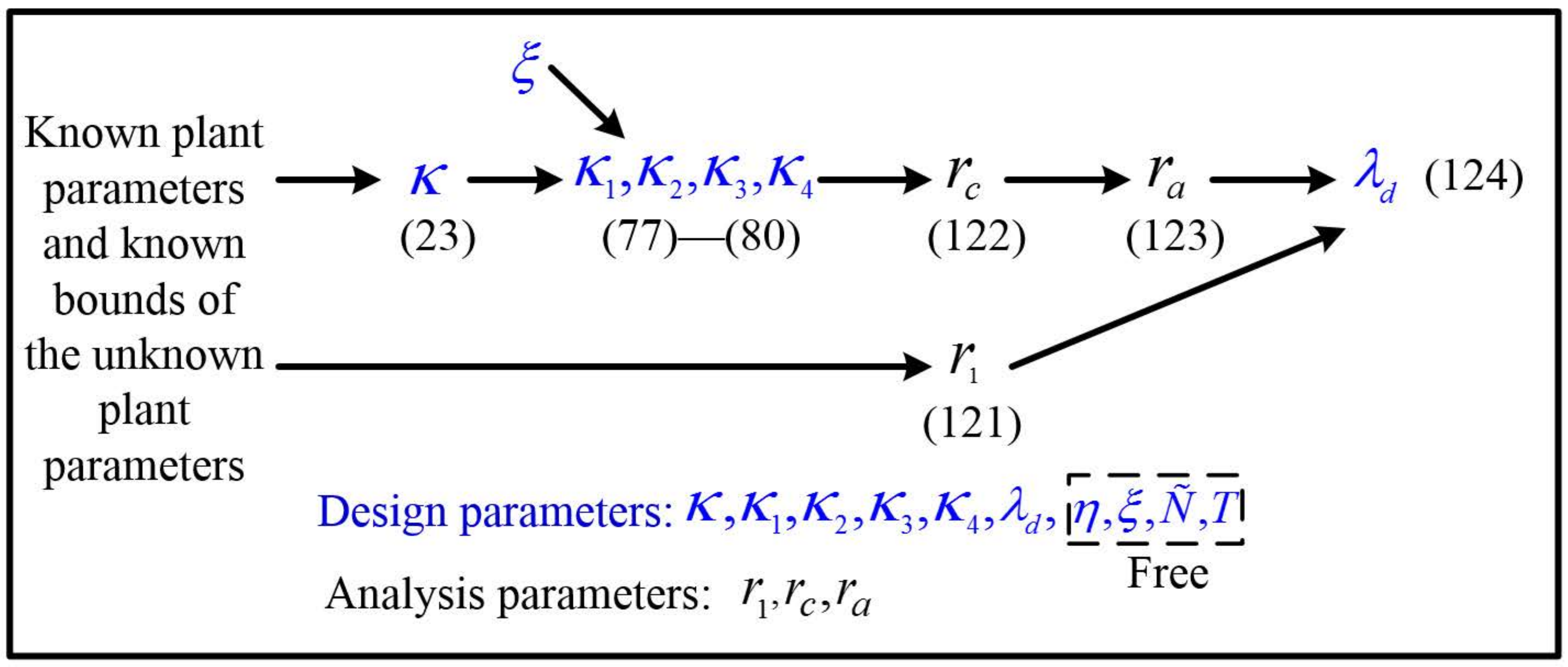}
\caption{An order of selecting the design parameters.}
\label{fig:0}
\end{figure}
\section{Simulation}\label{13sec:sim}
\subsection{Model}
The simulation model is \eqref{eq:ode1}--\eqref{eq:ode5} with the following parameters $$a=1.5,~b=1,~\varepsilon=1,~\lambda=3,~q=5.$$
The bounds $\overline \lambda, \underline\lambda, \overline a, \underline a$ of the unknown parameters $\lambda$, $a$ are set as $0, 5, 0, 3$, respectively.  Initial conditions are defined as
\begin{align}
u(x, 0) =x^2\sin(2\pi x),~~\zeta(0)=5.\label{eq:IC}
\end{align}
In the numerical calculation by the finite difference method, the model is discretized with the time step of $0.004$ and the space step of $0.05$.
\subsection{Design Parameters}
The design parameters are chosen as $\xi=1.1$, $\tilde N=5$, $T=1.2$, $\eta=15$, $\kappa=16$, $\kappa_1=\kappa_2=\kappa_3=\kappa_4=100$, $\lambda_d=20$, and $n$ in \eqref{eq:adaptivelaw} is truncated at $15$ and the initial condition of $m(t)$ is set as $m(0)=-500$. The function of free design parameters $\tilde N$, $T$, $\eta$, $\xi$ and $m(0)$ in adjusting the response of the closed-loop system  is illustrated as follows. A larger $m(0)$ can reduce the triggering times at the initial stage, which allows more data being collected for least-squares parameter identification. Even though a larger overshoot of the plant norms may appear due to the large $m(0)$, the increase of $\eta$ can fasten the convergence of $m(t)$ to zero, together with the decrease of $\xi$, which can make the plant states resampled more frequently, especially after the parameter estimates reaching the true values (which can always be achieved in initial several updates), and thus increase the decay rate of the plant states. Besides, as mentioned when introduce the design parameters $\tilde N, T$, the increase of $\tilde N$ allows the data in more time intervals  to be used in parameter identification, which can improve the accuracy and robustness of the identifier, and $T$ is chosen to avoid less frequent updates of parameter estimates considering the operation time is only $4$ s.
\subsection{Gain Kernels}
The kernels $\lambda (x)$, ${\Psi}(x,y)$ are directly obtained  from \eqref{eq:ga}, \eqref{eq:Psi} (using the modified Bessel function given in (A.10) in \cite{krstic2008} with $n=1$ and cutting off $m$ in (A.10) at 15), where the unknown coefficients are replaced by the piecewise-constant estimates. The approximate solution  $h(x,y)$ of \eqref{eq:h1}--\eqref{eq:h4} where the unknown coefficients are replaced by the piecewise-constant estimates is obtained  by the finite difference
method on a lower
triangular domain discretized as a grid
with the uniformed interval of $0.05$ (the
spatial variables $x$ and $y$ were discretized using 21 grid points
each). The value at each grid point is denoted as $h_{i,j}$, $1\le j\le i\le 21$, $i,j\in \mathbb N$. According to \eqref{eq:h4}, we know $h_{1,1}=0$. Together with \eqref{eq:h3}, we have $h_{i,i}=0$, $i=1,2,\cdots, 21$. Then $h_{2,1}$ can be solved via \eqref{eq:h1}. For representing
the two-order derivatives in \eqref{eq:h2} by the finite difference scheme,  we adopt the following approximate $h_{i,i-1}=h_{2,1}, i=2,\cdots,21$. The kernel $h$ will be recomputed when the parameter estimates are changed in the evolution. In the simulation results which will be shown later, we know that $h$ is recomputed twice, according to the parameter estimates $\hat\lambda$ and $\hat a$.
\subsection{Simulation Results}
The open-loop response of the ODE state  $\zeta(t)$ and PDE state $u(x,t)$ are shown in Figures \ref{fig:2}, \ref{fig:3}, from which we observe that the plant is unstable. Applying the proposed adaptive event-triggered controller $U_d$ defined in \eqref{eq:dU},
it is shown in Figures \ref{fig:4}, \ref{fig:5} that the ODE state $\zeta(t)$ and PDE state $u(x,t)$ are convergent to zero.
The piecewise-constant control input $U_d(t)$ defined in \eqref{eq:dU} and the continuous-in-state control signal $U_c(t)$ \eqref{eq:cU} used in ETM are shown in Figure \ref{fig:6}.  For the control input $U_d(t)$,  the estimate $\hat\theta$ is recomputed and the states $u$, $\zeta$ are resampled simultaneously, the total number of triggering times is $26$, the minimal dwell-time is $0.0404$ s, which is much larger than the highly conservative minimal dwell time estimate (whose order of magnitude is $10^{-6}$ s) obtained from \eqref{eq:undertaua}, \eqref{eq:undertau} in Lemma \ref{lem:dwell}. There are two "jumps" in the continuous-in-state control signal $U_c(t)$ \eqref{eq:cU} at the first two triggering times, because of the updates in the parameter estimates which are shown in Figure \ref{fig:7}, where the estimates reach the true values after two triggering times (the exact estimates are not obtained at the time of the first event under the nonzero initial condition \eqref{eq:IC}  as Lemmas \ref{13lem:theta}, \ref{lem:keep} imply, because of the approximation adopted in the simulation, including the discretization of time and space, and truncation of $n=1,2,\cdots$ in the estimator \eqref{eq:adaptivelaw}).
\begin{figure}
\centering
\includegraphics[width=9.5cm]{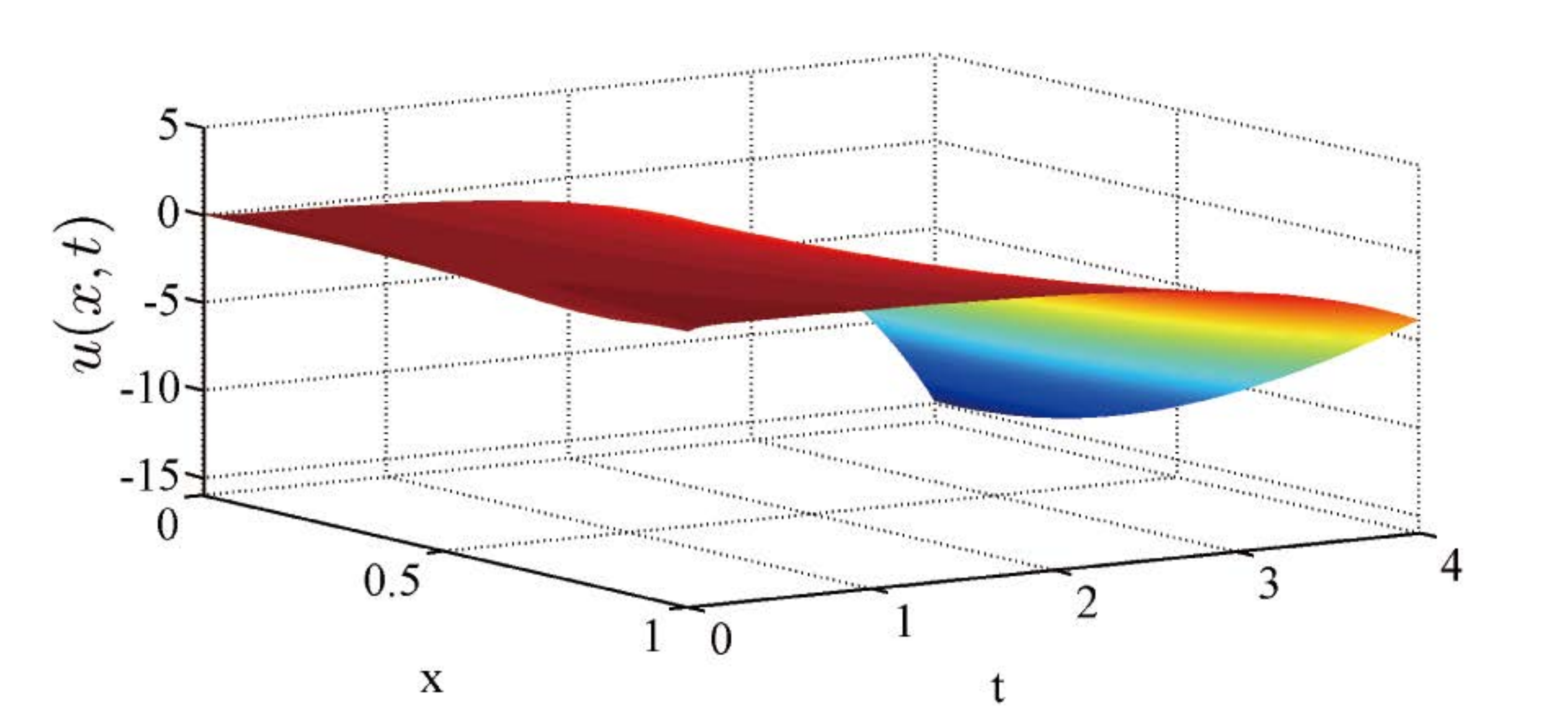}
\caption{The evolution of $u(x,t)$ in open loop.}
\label{fig:2}
\end{figure}
\begin{figure}
\centering
\includegraphics[width=9.5cm]{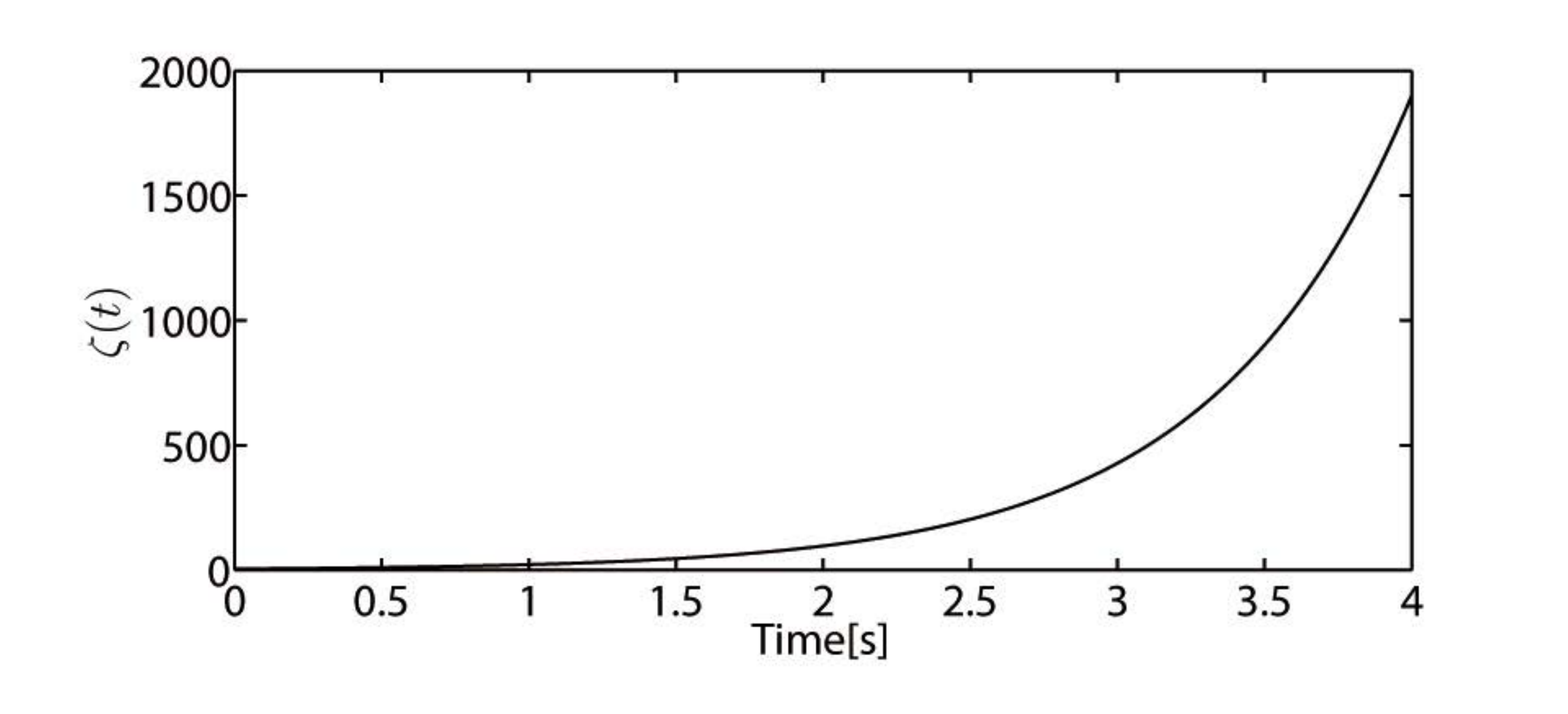}
\caption{The evolution of $\zeta(t)$ in open loop.}
\label{fig:3}
\end{figure}
\begin{figure}
\centering
\includegraphics[width=9.5cm]{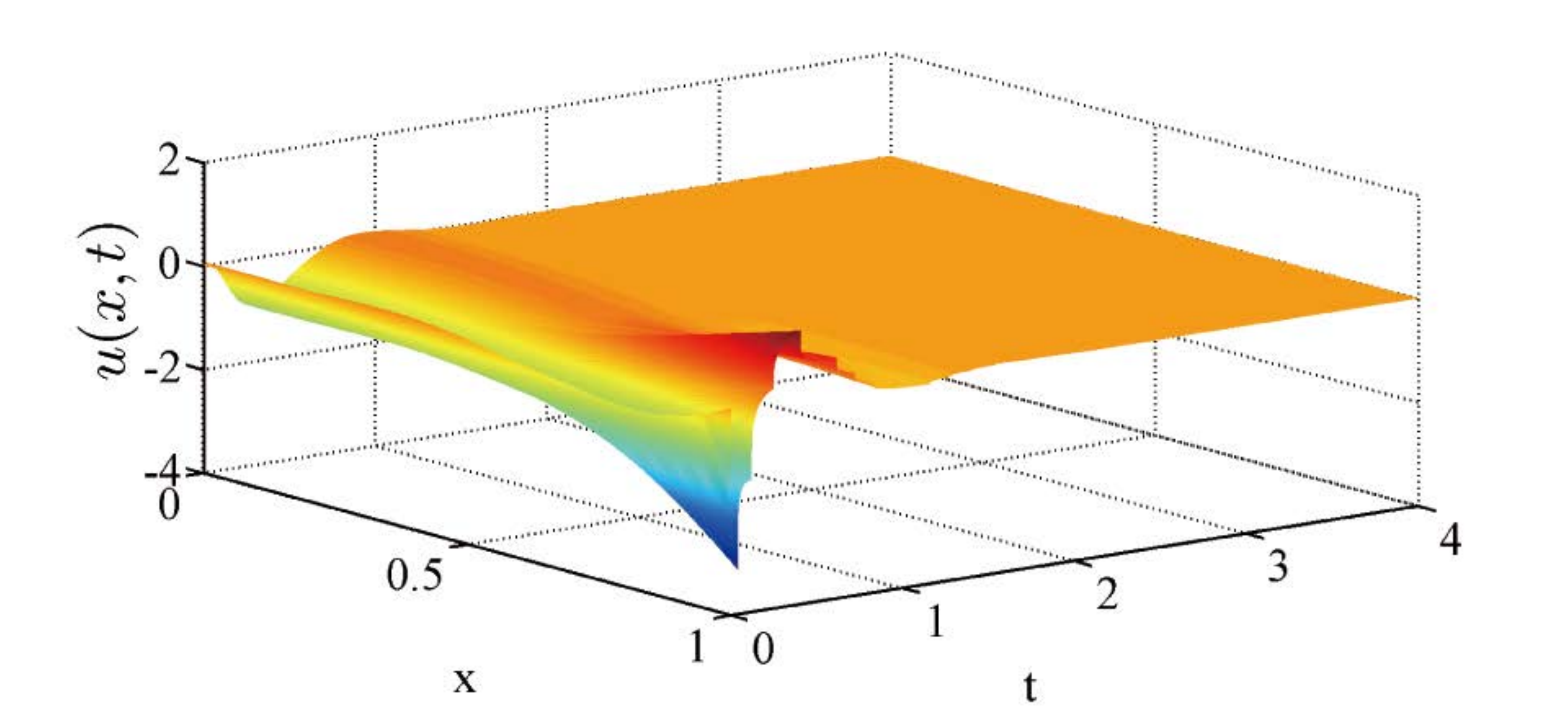}
\caption{The evolution of $u(x,t)$ under the control input $U_d(t)$ defined in \eqref{eq:dU}.}
\label{fig:4}
\end{figure}
\begin{figure}
\centering
\includegraphics[width=9.5cm]{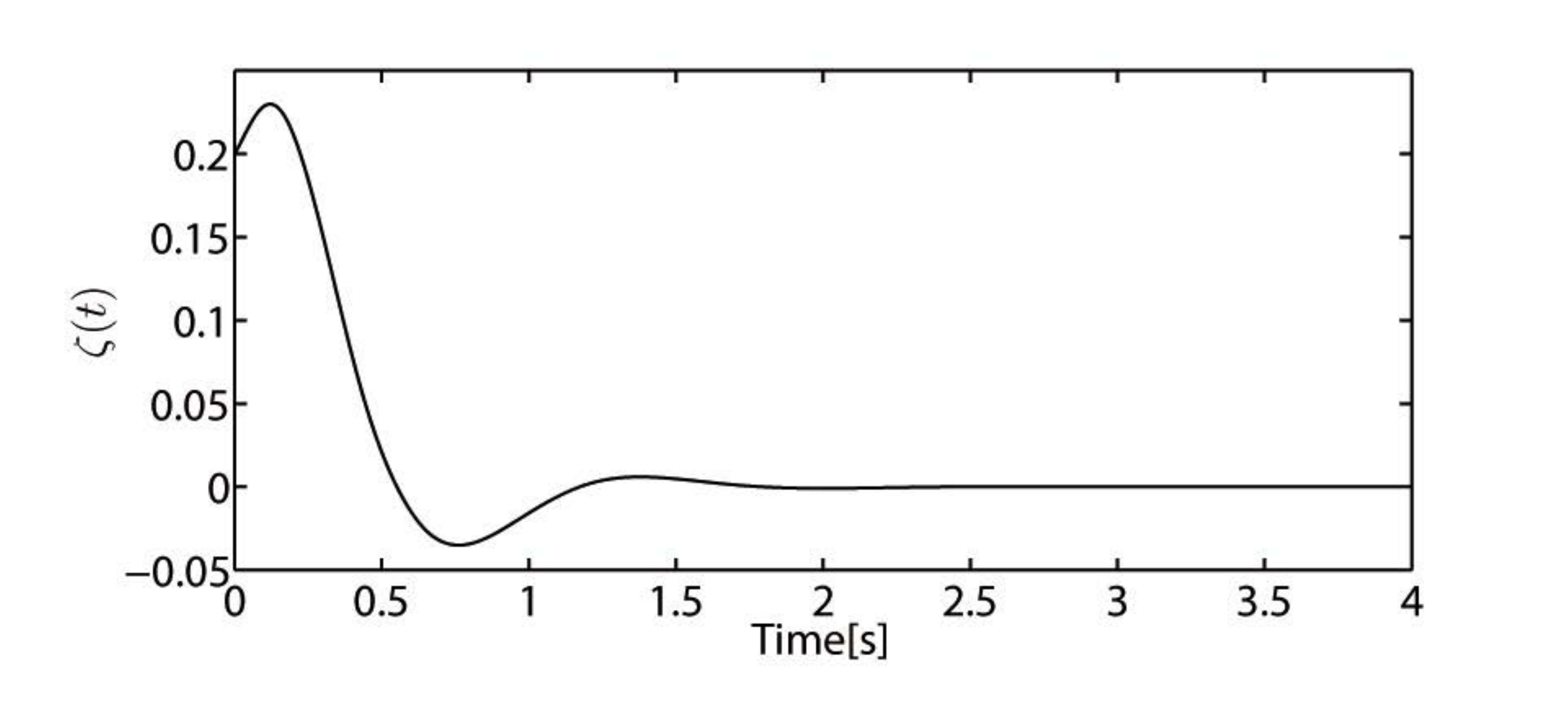}
\caption{The evolution of $\zeta(t)$ under the control input $U_d(t)$ defined in \eqref{eq:dU}.}
\label{fig:5}
\end{figure}
\begin{figure}
\centering
\includegraphics[width=9.5cm]{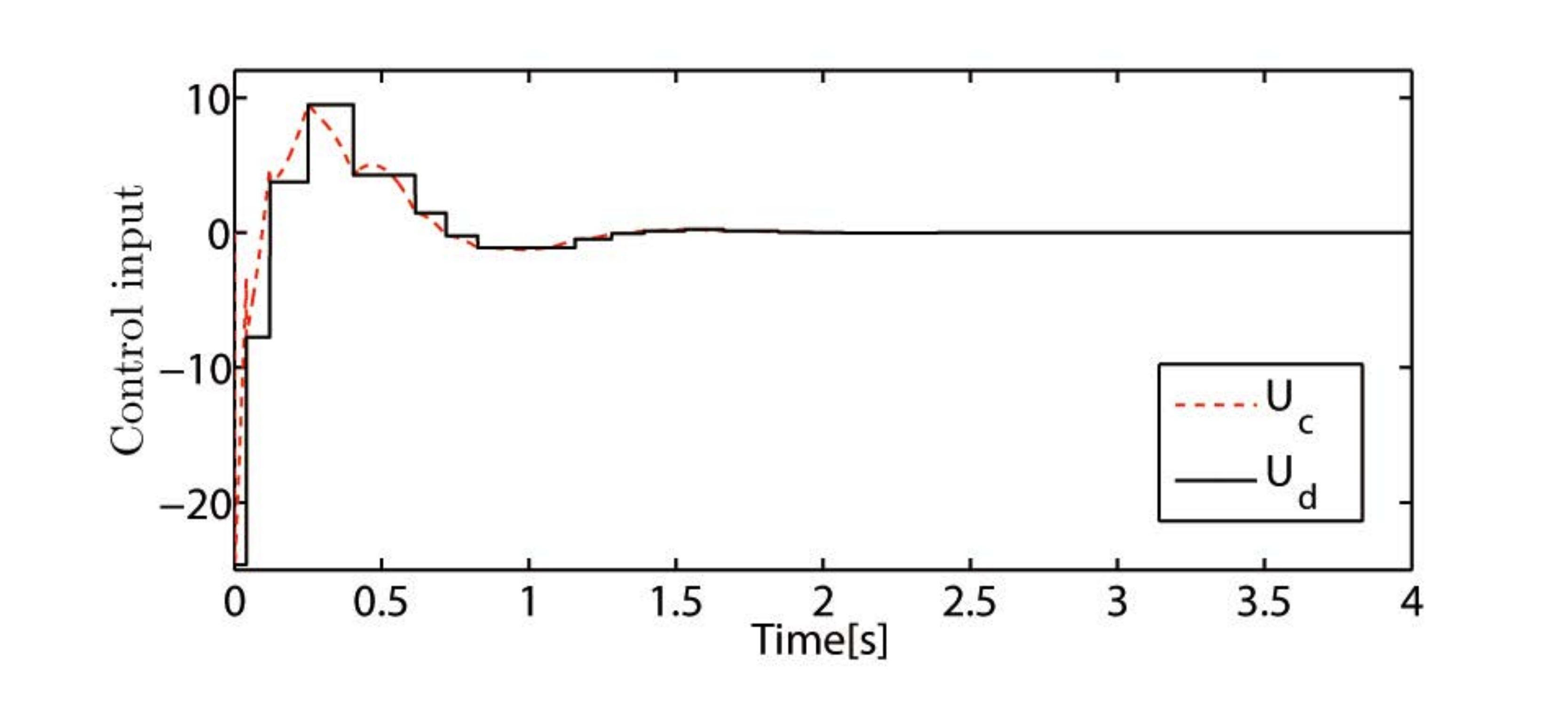}
\caption{The piecewise-constant control input $U_d(t)$ \eqref{eq:dU} and the continuous-in-state control signal $U_c(t)$ \eqref{eq:cU} used in ETM.}
\label{fig:6}
\end{figure}
\begin{figure}
\centering
\includegraphics[width=9.5cm]{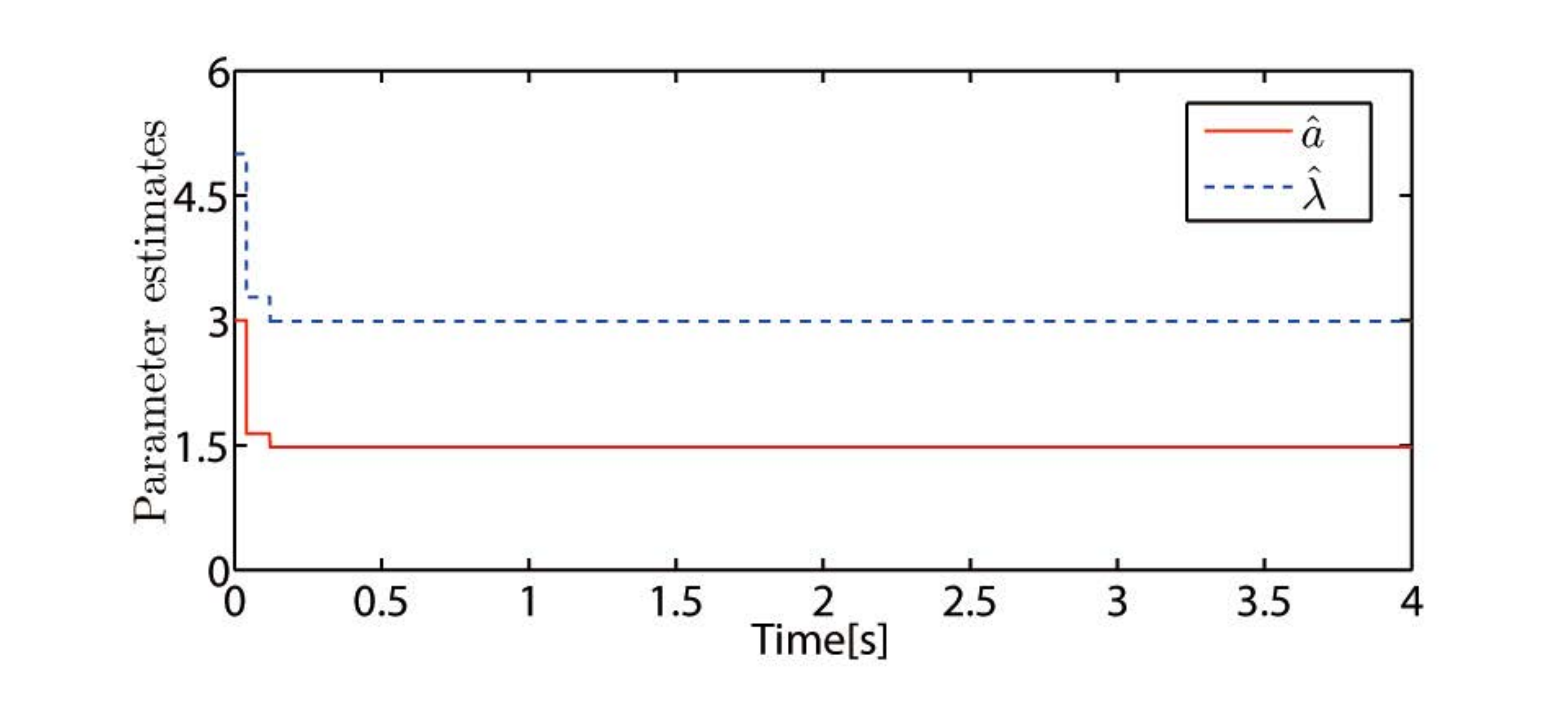}
\caption{The evolution of the parameter estimates.}
\label{fig:7}
\end{figure}
\begin{figure}
\centering
\includegraphics[width=9.5cm]{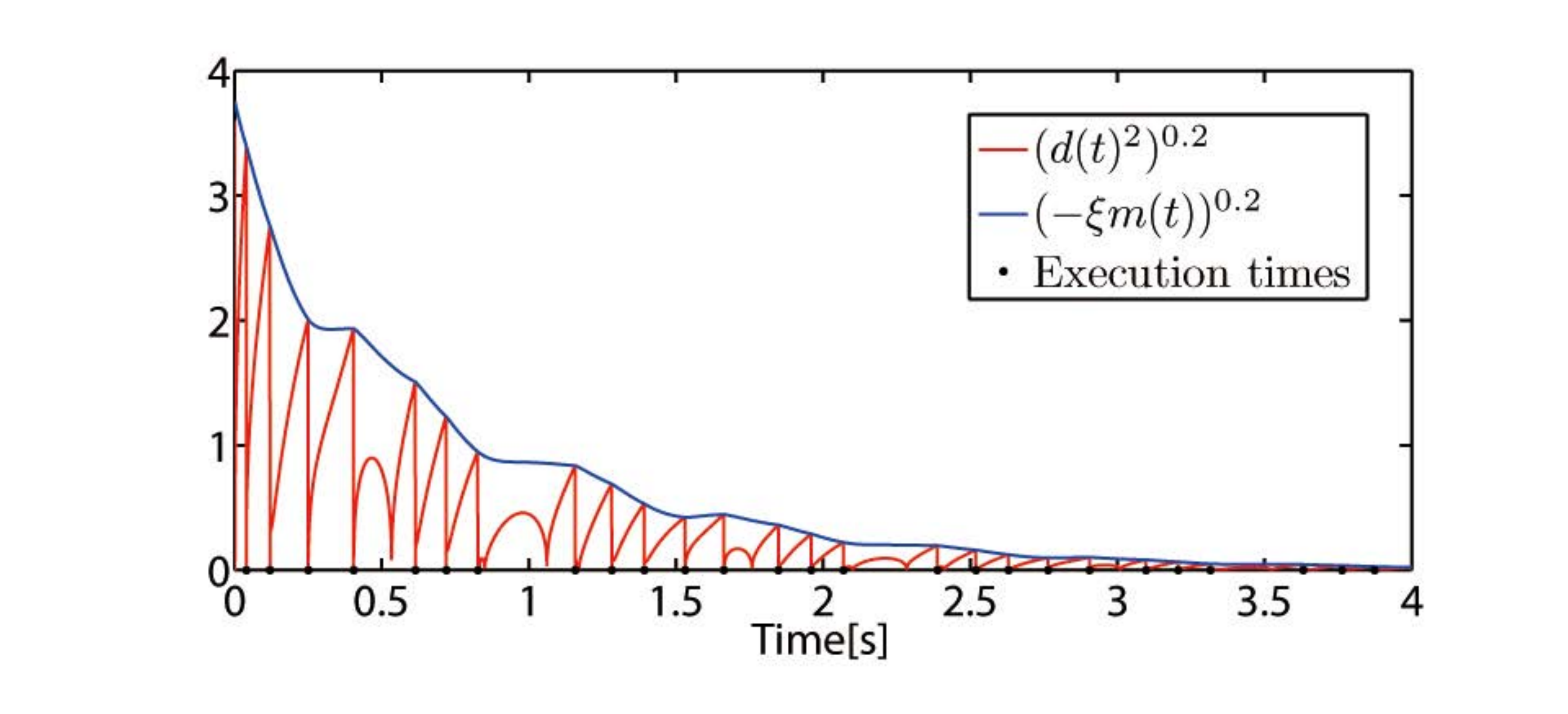}
\caption{The evolution of $(d(t)^2)^{0.2}$ and $(-\xi m(t))^{0.2}$.}
\label{fig:8}
\end{figure}
\begin{figure}
\centering
\includegraphics[width=9.5cm]{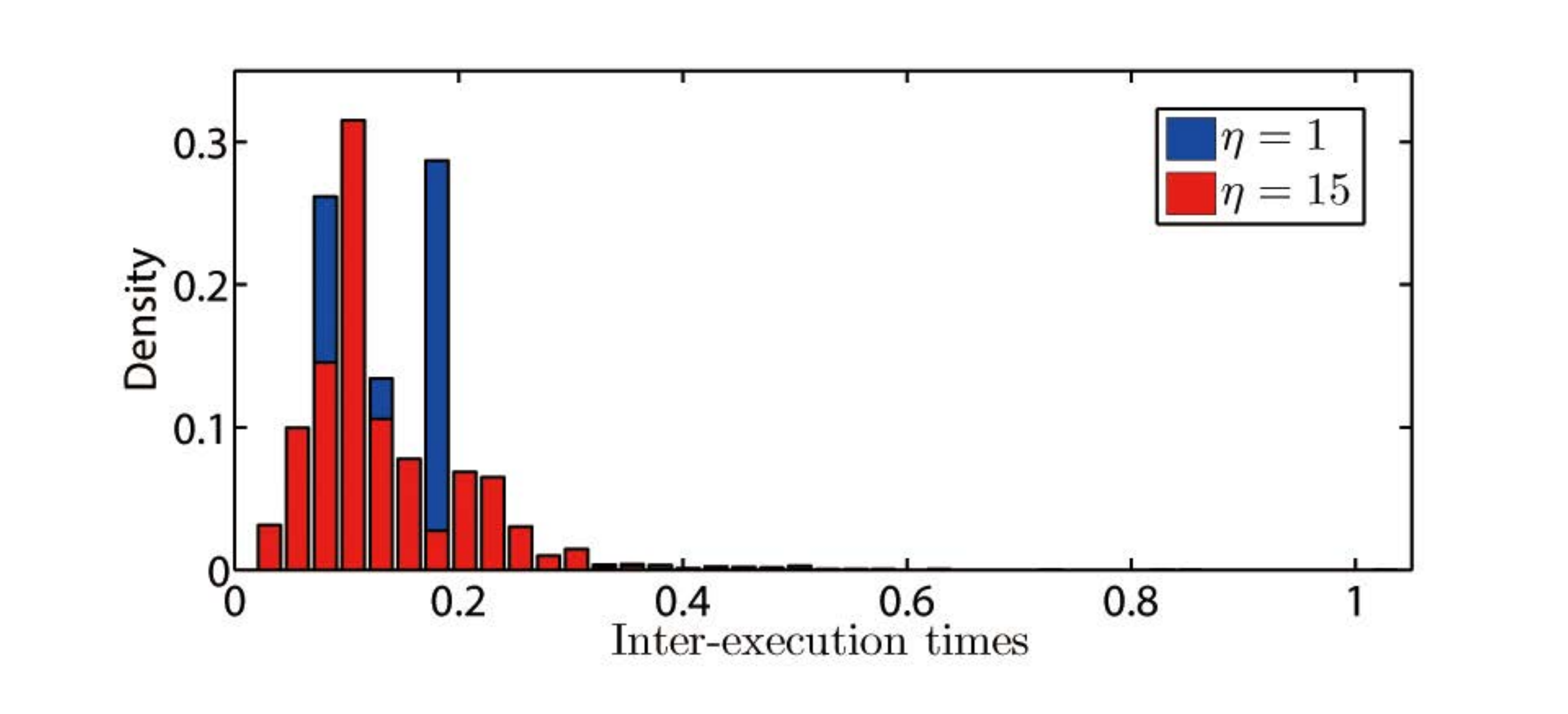}
\caption{Density of the inter-execution times computed for 100
different initial conditions given by $u(x,0)=x^2\sin(\bar n \pi x)$, $\zeta(0)=0.2$, $\bar n=1,2,\ldots, 100$.}
\label{fig:9}
\end{figure}

Figure \ref{fig:8} shows the time evolution of the functions
in the triggering condition \eqref{eq:tk1} and the execution times, {where an event is
generated, the control value is updated and $d(t)$ is reset to zero, when the trajectory
$d(t)^2$ reaches the trajectory $ - \xi m(t)$.}

 {Finally, we run simulations for 100 different initial conditions and compute the inter-execution times
between two triggering times. The density of the inter-execution times is shown in Figure \ref{fig:9}, from which we know that the prominent inter-execution times are around $0.1$ s when $\eta=15$, and increase to around 0.2 s when $\eta$ decreases  to 1}.
\section{Conclusion and future work}\label{sec:conclusion}
In this paper, we have proposed an adaptive event-triggered boundary
control scheme for a parabolic PDE-ODE system, where the reaction coefficient of the parabolic PDE, and the system parameter of the ODE
are unknown, and both of the parameter estimates and
control input employ piecewise-constant values.  The controller includes an event-triggering mechanism to determine the
{synchronous} update times of both the batch least-squares
identifier and plant states
in the control law. We have proved that the proposed control
guarantees: 1) no Zeno phenomenon occurs; 2) parameter estimates
are convergent to the true values in finite time under most initial conditions of the plant (all initial conditions except a set of measure zero); 3) the plant states are exponentially regulated
to zero. The effectiveness of the proposed
design is verified by a numerical example. In the future
work, the state-feedback control design will be extended to
the output-feedback type conforming to available sensors in
practice.
\section*{Appendix}
\subsection{Calculating Conditions of $\gamma(x)$}\label{ap:Ap1}
\setcounter{equation}{0}
\renewcommand{\theequation}{A.\arabic{equation}}
Inserting \eqref{eq:contran2b} into \eqref{eq:Ib1}, recalling \eqref{eq:I1}, \eqref{eq:am}, we have
\begin{align}
&\dot \zeta (t) +a_{\rm m}\zeta (t) - {b}w(0,t) + b\gamma(0)\zeta(t)\notag\\
=&\dot \zeta (t)-a \zeta (t)- {b}w(0,t)+(b\kappa+b\gamma(0))\zeta (t)\notag\\
=&(b\kappa+b\gamma(0))\zeta (t)=0.\label{eq:conga1}
\end{align}
Inserting \eqref{eq:contran2b} into \eqref{eq:Ib2}, recalling \eqref{eq:I2}, \eqref{eq:Ib1}, we have
\begin{align}
&{v _t}(x,t)-{\varepsilon}{v _{xx}}(x,t)+\gamma(x){b}v (0,t)\notag\\
=&w_t(x,t) - \gamma(x)\dot\zeta(t)-{\varepsilon}w_{xx}(x,t)\notag\\& + {\varepsilon}\gamma''(x)\zeta(t)+\gamma(x){b}v (0,t)\notag\\
=& \gamma(x)a_{\rm m}\zeta (t) -\gamma(x) {b}v (0,t) + {\varepsilon}\gamma''(x)\zeta(t)+\gamma(x){b}v (0,t)\notag\\
=&(\gamma(x)a_{\rm m}+{\varepsilon}\gamma''(x))\zeta (t)=0.\label{eq:conga2}
\end{align}
By virtue of \eqref{eq:Ib3} and \eqref{eq:I3}, we have
\begin{align}
v_x(0,t) = w_x(0,t) - \gamma'(0)\zeta(t)= - \gamma'(0)\zeta(t)=0.\label{eq:conga3}
\end{align}
According to \eqref{eq:conga1}--\eqref{eq:conga3}, the conditions of $\gamma(x)$ are obtained as
\begin{align}
 &{\varepsilon}\gamma''(x)+a_{\rm m}\gamma(x)=0,\label{eq:ga1}\\
 &\gamma(0)=-\kappa,\\
 &\gamma'(0)=0,\label{eq:ga3}
\end{align}
which are satisfied by \eqref{eq:ga}.
\subsection{Calculating Conditions of $h(x,y)$}\label{ap:Ap2}
\setcounter{equation}{0}
\renewcommand{\theequation}{B.\arabic{equation}}
Inserting \eqref{eq:thirdtran} into \eqref{eq:targ2}, using \eqref{eq:Ib2}, \eqref{eq:Ib3}, applying integration by parts twice, we obtain
\begin{align}
&\beta_t(x,t)-\varepsilon\beta_{xx}(x,t)\notag\\
=&v_t(x,t)-\int_0^x h(x,y)v_t(y,t)dy-{\varepsilon}v_{xx}(x,t)\notag\\&+{\varepsilon}\int_0^x h_{xx}(x,y)v(y,t)dy+{\varepsilon}h_{x}(x,x)v(x,t)\notag\\&+{\varepsilon}h_x(x,x)v(x,t)+{\varepsilon}h_y(x,x)v(x,t)+{\varepsilon}h(x,x)v_x(x,t)\notag\\
=&-\gamma(x){b}v (0,t)-\int_0^x h(x,y){\varepsilon}v_{xx}(y,t)dy\notag\\&+{b}\int_0^x h(x,y)\gamma(y)dyv (0,t)+{\varepsilon}\int_0^x h_{xx}(x,y)v(y,t)dy\notag\\
&+2{\varepsilon}h_{x}(x,x)v(x,t)+{\varepsilon}h_y(x,x)v(x,t)+{\varepsilon}h(x,x)v_x(x,t)\notag\\
=&-\gamma(x){b}v (0,t)-h(x,x){\varepsilon}v_{x}(x,t)+h(0,0){\varepsilon}v_{x}(0,t)\notag\\&+h_y(x,x){\varepsilon}v(x,t)-h_y(x,0){\varepsilon}v(0,t)\notag\\&-\int_0^x h_{yy}(x,y){\varepsilon}v(y,t)dy+{b}\int_0^x h(x,y)\gamma(y)dyv (0,t)\notag\\&+{\varepsilon}\int_0^x h_{xx}(x,y)v(y,t)dy+2{\varepsilon}h_{x}(x,x)v(x,t)\notag\\&+{\varepsilon}h_y(x,x)v(x,t)+{\varepsilon}h(x,x)v_x(x,t)\notag\\
=&-\left(h_y(x,0){\varepsilon}+\gamma(x){b}-{b}\int_0^x h(x,y)\gamma(y)dy\right)v (0,t)\notag\\&+({\varepsilon}h(x,x)-h(x,x){\varepsilon})v_{x}(x,t)\notag\\&+2{\varepsilon}(h_y(x,x)+h_x(x,x))v(x,t)\notag\\
&-{\varepsilon}\int_0^x (h_{yy}(x,y)-h_{xx}(x,y))v(y,t)dy=0.\label{eq:conh1}
\end{align}
By virtue of  \eqref{eq:targ3} and \eqref{eq:Ib3}, we have
\begin{align}
\beta_x(0,t)=v_x(0,t)-h(0,0)v(0,t)=-h(0,0)v(0,t)=0.\label{eq:conh2}
\end{align}
According to \eqref{eq:conh1}, \eqref{eq:conh2}, we obtain the conditions \eqref{eq:h1}--\eqref{eq:h4}.

\end{document}